\documentclass[10pt,twoside]{siamart1116}

\usepackage[english]{babel}
\usepackage{graphicx,epstopdf}
\usepackage{amsfonts,epsfig,fancyhdr,amsmath,amssymb}

\newcommand{\Names}{Hiroki Minamide}
\newcommand{\Title}{On the Adjacency spectra of alternating-oriented \texorpdfstring{\lowercase{$n$}}{n}-gonal staircase digraphs
}

\newcommand{\Spec}{\mathrm{Spec}}
\newcommand{\ord}{\mathrm{ord}}
 
\renewcommand{\theequation}{\thesection.\arabic{equation}}
\renewtheorem{theorem}{Theorem}[section]
\renewtheorem{definition}[theorem]{Definition}

\newsiamremark{remark}{Remark}
\newsiamremark{example}{Example}

\makeatletter
\def\cref@override@label@type#1\@nil#2{#1}
\makeatother

\begin{document}

\bibliographystyle{plain}

\setcounter{page}{1}

\thispagestyle{empty}

   \title{\Title
 }

\author{Hiroki Minamide
\thanks{Department of Liberal Arts, National Institute of Technology, Tokyo College, Hachioji, Tokyo, 193-0997, Japan.\newline
Department of Mathematical and Computing Science, School of Computing, Institute of Science Tokyo, Ookayama, Meguro-ku, Tokyo 152-8552 Japan.
\newline(minamide@tokyo-ct.ac.jp).
}
}

\markboth{\Names}{\Title}

\maketitle

\begin{abstract}
For integers $n\ge3$ and $r\ge1$, let $\Gamma_{n,r}$ be the alternating-oriented digraph formed by gluing $r$ directed $n$-cycles along a single edge in a staircase pattern, and
let $A_{n,r}$ be its adjacency matrix.
A canonical $n$-layer partition puts $A_{n,r}$ into an $n$-cyclic block form and isolates a cyclic product core $K_{n,r}$,
so the nonzero spectrum of $A_{n,r}$ is obtained from that of $K_{n,r}$ by an $n$th-root lift.
We show that $K_{n,r}$ is totally nonnegative and irreducible, hence its nonzero eigenvalues are real, positive, and simple.
It follows that all nonzero eigenvalues of $A_{n,r}$ are simple and occur in $\exp(2\pi i/n)$-orbits, forming unions of regular $n$-gons in the complex plane.
A one-step Schur complement produces a three-term recursion in $r$ for the characteristic polynomials $\Phi_{n,r}\in\mathbb Z[x]$ of $A_{n,r}$.
This determines both the zero multiplicity and the nonzero spectral count and leads to a cubic-denominator generating function.
Applying Tran-type confinement gives the uniform bound for the spectral radius $\rho(A_{n,r})\le (27/4)^{1/n}$, which is sharp in the sense that
$\displaystyle\lim_{r\to\infty}\rho(A_{n,r})=(27/4)^{1/n}$ for each fixed $n$.
Finally, specializing at $x=1$ links $\Phi_{n,r}(1)$ to Padovan’s spiral numbers and yields a complete classification of rational nonzero eigenvalues.
\end{abstract}

\begin{keywords}
Directed graphs, Adjacency spectrum, Block-cyclic matrices, Totally nonnegative matrices,
Root confinement, Padovan numbers, $k$-Narayana numbers
\end{keywords}

\begin{AMS}
05C50, 15A18, 15B48, 11B39.
\end{AMS}


\section{Introduction}

For integers $n\ge3$ and $r\ge1$, let $\Gamma_{n,r}$ be the alternating-oriented digraph obtained by gluing $r$ directed $n$-cycles in a staircase pattern, and let $A_{n,r}$ be its adjacency matrix.
Figure~\ref{fig:Gamma_examples} illustrates the examples $\Gamma_{3,8}$ and $\Gamma_{4,6}$.
Our goal is to describe the adjacency spectrum of $A_{n,r}$ in the complex plane.

\paragraph{Related work}
Spectral problems for digraph families obtained by gluing directed $n$-cycles along shared edges have recently become a fertile testing ground for polygonal spectral patterns.
In such constructions, the gluing parameter $r$ counts the number of polygonal blocks and governs the size of the resulting digraph.
The line of work originated from chemically motivated models: for alternating-oriented linearly connected hexagons (modeling \emph{polyacenes}, i.e., linear chains of fused benzene rings), Kashihara--Asano--Minamide proved that the spectrum decomposes into regular hexagons via explicit recurrences for the characteristic polynomial \cite{KAM2023}.
This viewpoint was then extended to other gluing schemes.
For alternating-oriented ladder graphs, Ichikawa--Kashihara--Minamide showed that the characteristic polynomial is related to that of an undirected path graph, and hence to Fibonacci polynomials \cite{IKM2024}.
For alternating-oriented comb graphs, Kashihara--Minamide identified a connection between the characteristic polynomial and that of an undirected comb graph, and hence with Pell polynomials \cite{KM2025}.
For alternating-oriented annular graphs, a similar connection with the characteristic polynomial of an undirected cycle graph, and hence with Chebyshev polynomials of the first kind, was identified in \cite{KM}.

A common feature of these works is that the characteristic polynomial can be identified with, or reduced to, a classical special-polynomial family, so that known root properties yield explicit spectral information.
Our staircase digraphs $\Gamma_{n,r}$ fit into this circle of ideas but behave differently.
The natural companion objects are the $k$-Narayana numbers viewed as polynomials in $k$, for which explicit root formulas are not available in general.
We therefore take a linear-algebraic route based on cyclic reduction and positivity.

\paragraph{Approach and main results}
A canonical $n$-layer partition puts $A_{n,r}$ into an $n$-cyclic (block-cyclic) normal form and isolates a cyclic product core $K_{n,r}$, so the nonzero spectrum of $A_{n,r}$ is obtained from that of $K_{n,r}$ by taking $n$th roots.
We prove that $K_{n,r}$ is totally nonnegative and irreducible, hence its nonzero eigenvalues are real, positive, and simple.
Consequently, the nonzero eigenvalues of $A_{n,r}$ are simple, and the nonzero spectrum forms $\exp(2\pi i/n)$-packets, i.e., unions of regular $n$-gons centered at the origin, each with a distinguished vertex on the positive real axis.
A one-step Schur complement yields a three-term recursion in the gluing parameter $r$ for the characteristic polynomials $\Phi_{n,r}\in\mathbb{Z}[x]$.
From this recursion we determine the algebraic multiplicity of the zero eigenvalue and the nonzero spectral count.
We then apply Tran’s root-confinement theorem \cite{Tran2014} to the associated cubic-denominator generating function, obtaining the uniform bound
$\rho(A_{n,r})\le (27/4)^{1/n}$ and the sharp limit $\displaystyle\lim_{r\to\infty}\rho(A_{n,r})=(27/4)^{1/n}$ for each fixed $n$.
Finally, specializing the recursion at $x=1$ connects $\Phi_{n,r}(1)$ to Padovan's spiral numbers and yields a complete classification of rational nonzero eigenvalues.
Figure~\ref{fig:spectrum_Gamma38} illustrates the polygonal spectrum for $\Gamma_{3,8}$.

\paragraph{Organization}
Sections~\ref{sec:cyclic-block-form}--\ref{sec:TN-simplicity} develop the $n$-cyclic reduction and the core theory, Section~\ref{sec:recursion-and-counts} derives the $r$-recursion and spectral counts,
Sections~\ref{sec:Tran}--\ref{sec:reconstruction} treat confinement, spectral geometry, and reconstruction questions, and Section~\ref{sec:rational} classifies rational nonzero eigenvalues;
the appendices collect comparisons and worked examples.

\section{Cyclic block form and the polygonal spectrum}\label{sec:cyclic-block-form}
The introduction pointed out a periodic feature of $\Gamma_{n,r}$ of length $n$.
In this section we turn that periodicity into a concrete normal form of the adjacency matrix.
Rather than fixing an explicit grading map $V(\Gamma_{n,r})\to\mathbb Z/n\mathbb Z$,
we construct a canonical partition of $V(\Gamma_{n,r})$ into $n$ layers along the recursion.
A single permutation that lists vertices layer-by-layer then reveals an $n$-cyclic block form,
from which the polygonal pattern of the nonzero eigenvalues follows by a standard cyclic-product reduction.

\subsection{The digraph $\Gamma_{n,r}$}\label{subsec:Gamma}
We now give a precise definition of the alternating-oriented $n$-gonal staircase digraph with $r$ blocks, denoted by $\Gamma_{n,r}$.
Let $n\ge 3$ and $r\ge 1$, and set
\[
  V(\Gamma_{n,r})=\{1,2,\dots, rn-2r+2\}.
\]
The directed edge set $E(\Gamma_{n,r})\subset V(\Gamma_{n,r})\times V(\Gamma_{n,r})$ is defined recursively by
\[
E(\Gamma_{n,1})=\{(j,j+1):1\le j\le n-1\}\cup\{(n,1)\},
\]
\[
E(\Gamma_{n,2})=E(\Gamma_{n,1})\cup\{(1,n+1)\}\cup\{(j,j+1):n+1\le j\le 2n-3\}\cup\{(2n-2,n)\},
\]
and, for $r\ge 3$,
\[
E(\Gamma_{n,r})=E(\Gamma_{n,2})\cup \bigcup_{i=1}^{r-2} F_i,
\]
where for each $i$ with $1\le i\le r-2$ we set
\begin{align*}
F_i
:=\{(in-2i+2,\; in+n-2i+1)\}
 &\cup\ \{(j,j+1): in+n-2i+1\le j\le in+2n-2i-3\}\\
 &\cup\ \{(in+2n-2i-2,\; in+n-2i)\}.
\end{align*}

\begin{remark}
When $n=3$, the middle path part in the definitions of $E(\Gamma_{n,2})$ and $F_i$ is empty,
so $E(\Gamma_{3,2})\setminus E(\Gamma_{3,1})$ and each $F_i$ consist of two edges.
The construction remains valid verbatim.
\end{remark}

See Figure~1 for the examples $\Gamma_{3,8}$ and $\Gamma_{4,6}$ appearing in the definition of $\Gamma_{n,r}$.

\begin{figure}[t]
\centering
\includegraphics[scale=.35]{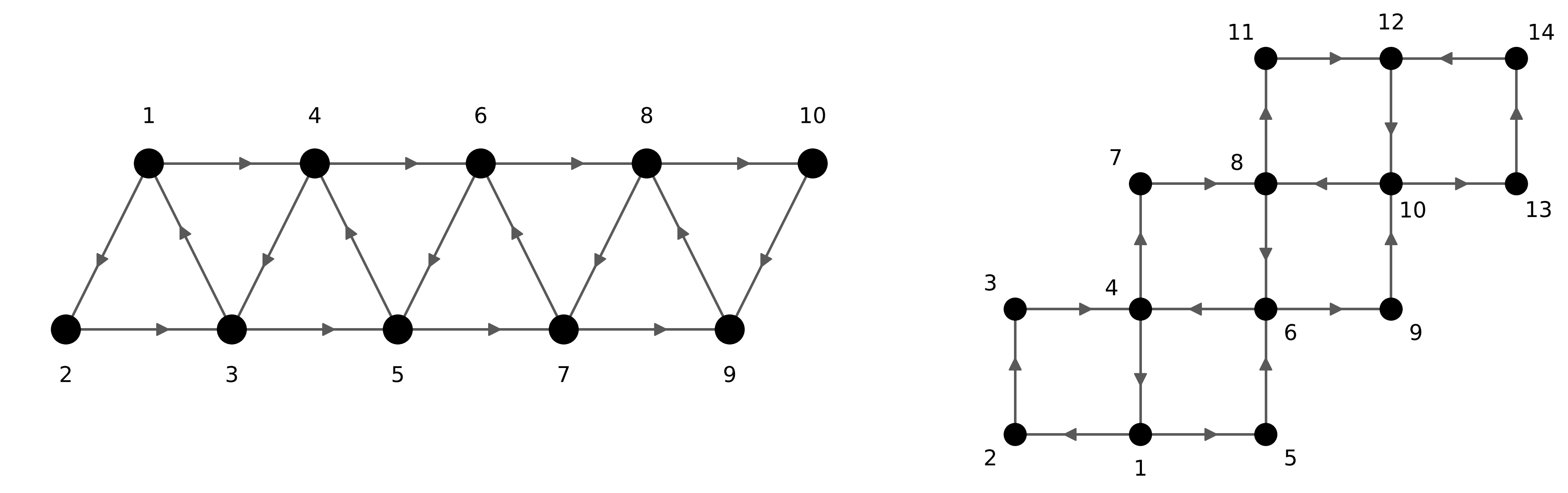}
\caption{Visualizations of the digraphs $\Gamma_{3,8}$ and $\Gamma_{4,6}$.}
\label{fig:Gamma_examples}
\end{figure}

We write $A_{n,r}$ for the adjacency matrix of $\Gamma_{n,r}$, i.e., the $(0,1)$-matrix indexed by $V(\Gamma_{n,r})$
whose $(u,v)$-entry is $1$ if and only if $(u,v)\in E(\Gamma_{n,r})$.
We set $\Phi_{n,r}:=\det(xI-A_{n,r})\in \mathbb{Z}[x]$ for the characteristic polynomial of $A_{n,r}$.
The digraph $\Gamma_{n,r}$ has an intrinsic period-$n$ structure: every edge belongs to one of the $n$-cycles created by the construction.
In the next subsection we formalize this by a canonical $n$-layer partition, which puts $A_{n,r}$ into an $n$-cyclic block form.

\subsection{Layer partition and the induced permutation}
We assign each vertex a layer index in $\mathbb Z/n\mathbb Z$ so that every directed edge advances the index by $1$ modulo $n$.

\begin{definition}[Layer partition]\label{def:layers}
Along the recursive construction of $E(\Gamma_{n,r})$, we define a map $\ell:V(\Gamma_{n,r})\to \mathbb Z/n\mathbb Z$ as follows.
Set $\ell(1)=0$.
Whenever a vertex $v$ appears for the first time as the terminal vertex of a newly added edge $(u,v)$, define
\[
\ell(v)\equiv \ell(u)+1 \pmod n.
\]
For each $c\in\mathbb Z/n\mathbb Z$, set
\[
V^{(c)}_{n,r}:=\{v\in V(\Gamma_{n,r}) : \ell(v)\equiv c\pmod n\}.
\]
\end{definition}

\noindent\emph{Well-definedness.}
Every vertex $v\neq 1$ appears for the first time as the terminal vertex of a newly added edge $(u,v)$ whose initial vertex $u$ already lies in the previously constructed digraph.
Hence $\ell(v)$ is assigned at least once, and the ``first appearance'' clause ensures it is assigned at most once.

\begin{lemma}[Layer increment along edges]\label{lem:layer-compat}
For every edge $(u,v)\in E(\Gamma_{n,r})$ we have $\ell(v)\equiv \ell(u)+1\pmod n$.
Equivalently, there are no edges within a layer, and every edge goes from $V_{n,r}^{(c)}$ to $V_{n,r}^{(c+1)}$ (indices modulo $n$).
\end{lemma}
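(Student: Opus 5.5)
The plan is to reduce the statement to the few ``closing'' edges of the construction. By Definition~\ref{def:layers}, every edge $(u,v)$ through which a vertex $v$ first appears satisfies $\ell(v)\equiv\ell(u)+1$ by definition. Scanning the recursive description of $E(\Gamma_{n,r})$, each edge is of this type except one per block: the edge $(n,1)$ in $E(\Gamma_{n,1})$, the edge $(2n-2,n)$ in $E(\Gamma_{n,2})$, and the edge $(in+2n-2i-2,\;in+n-2i)$ in each $F_i$. These go into previously labelled vertices. So it suffices to verify the congruence for these closing edges, and I would do this by induction on the block index. The ``equivalently'' clause then follows at once: an edge inside a layer would force $1\equiv 0\pmod n$, which is impossible for $n\ge3$.

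First I compute the labels explicitly on the first two blocks.
\begin{itemize}
\item Block~1: $\ell(j)=j-1$ for $1\le j\le n$, so the edge $(n,1)$ gives $\ell(n)+1=n\equiv 0=\ell(1)$.
\item Block~2: the edge $(1,n+1)$ and the path $n+1\to\cdots\to 2n-2$ give $\ell(n+k)=k$ for $1\le k\le n-2$. The closing edge $(2n-2,n)$ then gives $\ell(2n-2)+1=n-1=\ell(n)$.
\end{itemize}

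For the general step, put $s_i:=in-2i+2$ (the source of the opening edge of $F_i$) and $w_i:=in+n-2i$ (the target of its closing edge). Set $s_0:=1$ and $w_0:=n$, so that $s_1=w_0=n$ and block~2 is the cycle $s_0\to(\text{new path})\to w_1\to s_0$ read appropriately. The key index observation is that $s_{i}=w_{i-1}$ for all $i\ge1$. Also, $w_i$ is the last new vertex created in the previous step (by $F_{i-1}$, or by $E(\Gamma_{n,2})$ when $i=1$). Concretely, $F_i$ is a directed $n$-cycle: $s_i$, then $n-2$ new vertices labelled $\ell(s_i)+1,\dots,\ell(s_i)+n-2$, then the existing vertex $w_i$. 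Hence its closing edge is compatible if and only if
\[
\ell(w_i)\equiv \ell(s_i)+n-1\equiv \ell(s_i)-1 \pmod n .
\]

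I prove this invariant by induction on $i$, the base case being the block~2 computation above. For the inductive step, $w_i$ is the end of the new path of the preceding block, which starts at $s_{i-1}$. So $\ell(w_i)\equiv\ell(s_{i-1})+n-2\equiv\ell(s_{i-1})-2$. On the other hand, $s_i=w_{i-1}$, and the induction hypothesis gives $\ell(s_i)\equiv\ell(s_{i-1})-1$. Subtracting these two congruences yields $\ell(w_i)\equiv\ell(s_i)-1$, as required.

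The main obstacle is purely the index bookkeeping. One must check that $s_i=w_{i-1}$, and that $w_i=in+n-2i$ is exactly the last vertex produced in the previous step. The latter is the upper endpoint $(i-1)n+2n-2(i-1)-2$ of the path in $F_{i-1}$, and $2n-2$ for $i=1$. The degenerate case $n=3$, where the paths consist only of the single new vertex, should be checked separately, though the same formulas apply verbatim.
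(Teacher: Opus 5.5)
Your proof is correct and follows the same route as the paper's: edges whose terminal vertex is new satisfy the congruence by definition, and each closing edge is checked by going once around its directed $n$-cycle, whose one old edge $(w_i,s_i)$ is the previous block's closing edge. Your explicit induction via $s_i=w_{i-1}$ makes rigorous what the paper's phrase ``each of its edges advances the layer by $1$'' leaves implicit (one harmless slip: the block-$2$ cycle is $s_0\to\cdots\to w_1\to w_0\to s_0$, not $\cdots\to w_1\to s_0$).
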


\begin{proof}
By Definition~\ref{def:layers}, whenever a vertex $v$ appears for the first time as the terminal vertex of a newly added edge $(u,v)$, we assign
$\ell(v)\equiv \ell(u)+1\pmod n$.
Thus the congruence holds for every edge at the moment it is introduced unless its terminal vertex has already been assigned a layer.

The only edges whose terminal vertices are already assigned are the closing (exit) edges that attach a newly created path back to the previously constructed part.
Fix such an exit edge $(u,v)$.
The vertex $u$ lies on the newly created directed path, so $\ell(u)$ is obtained from the entrance edge by repeatedly adding $1$ along the path.
The vertex $v$ lies in the older part and already has an assigned layer.
Since the path from $v$ to $u$ has length $n-1$ and each of its edges advances the layer by $1$, we have
$\ell(u)\equiv \ell(v)+n-1\pmod n$, hence $\ell(v)\equiv \ell(u)+1\pmod n$.

Therefore $\ell(v)\equiv \ell(u)+1\pmod n$ for every edge $(u,v)\in E(\Gamma_{n,r})$, and the layer statement follows.
\end{proof}

\subsection{An $n$-cyclic block form}

Let $\sigma_{n,r}$ be the permutation that lists the vertices layer-by-layer,
first all vertices of $V_{n,r}^{(0)}$ in increasing order, then those of $V_{n,r}^{(1)}$, and so on up to $V_{n,r}^{(n-1)}$.
Let $S_{n,r}$ be the corresponding permutation matrix, and define
\[
  B_{n,r}:=S_{n,r} A_{n,r} S_{n,r}^{-1}.
\]
From now on we work with $B_{n,r}$ and its blocks. Since $B_{n,r}$ is permutation similar to $A_{n,r}$, they have the same spectrum, and we will not switch back to $A_{n,r}$ unless explicitly stated.

\begin{proposition}[$n$-cyclic block form]\label{prop:ncyclic-form}
With respect to the layer-by-layer ordering
\[
V(\Gamma_{n,r}) = V_{n,r}^{(0)} \sqcup V_{n,r}^{(1)} \sqcup \cdots \sqcup V_{n,r}^{(n-1)},
\]
the matrix $B_{n,r}$ has the block form
\[
B_{n,r}=
\begin{bmatrix}
0 & B_{n,r}^{(0)} & 0 & \cdots & 0 & 0\\
0 & 0 & B_{n,r}^{(1)} & \cdots & 0 & 0\\
\vdots & \vdots & \ddots & \ddots & \vdots & \vdots\\
0 & 0 & \cdots & 0 & B_{n,r}^{(n-3)} & 0\\
0 & 0 & \cdots & 0 & 0 & B_{n,r}^{(n-2)}\\
B_{n,r}^{(n-1)} & 0 & \cdots & 0 & 0 & 0
\end{bmatrix},
\]
where $B_{n,r}^{(c)}$ encodes the edges from $V_{n,r}^{(c)}$ to $V_{n,r}^{(c+1)}$
(indices modulo $n$).
\end{proposition}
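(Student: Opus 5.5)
The block form is a direct consequence of Lemma~\ref{lem:layer-compat} combined with the definition of $B_{n,r}$ as a permutation conjugate of $A_{n,r}$. The plan is to translate the layer statement into a statement about which blocks of $B_{n,r}$ may be nonzero.

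First, conjugating by $S_{n,r}$ only relabels the rows and columns. Concretely, the entry of $B_{n,r}$ in position $(\sigma_{n,r}(u),\sigma_{n,r}(v))$ equals the $(u,v)$-entry of $A_{n,r}$. This is $1$ exactly when $(u,v)\in E(\Gamma_{n,r})$. With the convention that $\sigma_{n,r}$ sends $v$ to its position in the layer-by-layer list, $S_{n,r}A_{n,r}S_{n,r}^{-1}$ is precisely the adjacency matrix written in the new vertex order. I will fix this convention explicitly so that the identity holds as stated rather than for the inverse permutation.

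Next, I partition the rows and columns according to $V^{(0)}_{n,r}\sqcup\cdots\sqcup V^{(n-1)}_{n,r}$. This is possible because the listing is contiguous in each layer by construction of $\sigma_{n,r}$. Let $(c,d)$ denote the block with row indices in $V^{(c)}_{n,r}$ and column indices in $V^{(d)}_{n,r}$. By Lemma~\ref{lem:layer-compat}, any nonzero entry in block $(c,d)$ comes from an edge $(u,v)$ with $\ell(u)=c$ and $\ell(v)=d$, and therefore $d\equiv c+1 \pmod n$.

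Hence every block is zero except those in positions $(c,c+1)$ for $0\le c\le n-2$ and position $(n-1,0)$. This is exactly the displayed superdiagonal-plus-corner pattern. Setting $B^{(c)}_{n,r}$ to be block $(c,c+1 \bmod n)$, its entries record the edges from $V^{(c)}_{n,r}$ to $V^{(c+1)}_{n,r}$, which is the claim.

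I do not expect any substantial obstacle, since the combinatorial content is already contained in Lemma~\ref{lem:layer-compat}. The only points needing care are the following.
\begin{itemize}
\item The row/column convention for the permutation matrix must be fixed as described above.
\item For the block display to make literal sense, each layer $V^{(c)}_{n,r}$ should be nonempty. This holds because the first $n$-cycle $1\to2\to\cdots\to n\to1$ has $\ell(j)=j-1$, so it meets every layer. I will note this so that no block row or column is empty.
\end{itemize}
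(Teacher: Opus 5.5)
Your proposal is correct and follows the paper's own argument: Lemma~\ref{lem:layer-compat} forces every nonzero entry of $B_{n,r}$ into a block $(c,c+1 \bmod n)$ of the layer-by-layer ordering, which is exactly the displayed pattern. Your extra bookkeeping is a harmless refinement that the paper leaves implicit; it consists of fixing the convention for $S_{n,r}$ and noting that each layer is nonempty because $\ell(j)=j-1$ on the first cycle.
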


\begin{proof}
By Lemma~\ref{lem:layer-compat}, every edge of $\Gamma_{n,r}$ goes from $V_{n,r}^{(c)}$ to $V_{n,r}^{(c+1)}$
(indices modulo $n$).
Hence, in the layer-by-layer order $\sigma_{n,r}$, the adjacency matrix $B_{n,r}$ has nonzero blocks only in positions $(c,c+1)$, yielding the displayed form.
\end{proof}

\subsection{Cyclic product reduction and polygonal eigenvalues}\label{subsec:reduction}

The $n$-cyclic block form of $B_{n,r}$ makes the role of the layers transparent.
A single application of $B_{n,r}$ sends each layer to the next one, and after $n$ steps one returns to the original layer.
This yields an $n$-step return map on each layer, represented by a square matrix.
The point of introducing this return map is that, once its eigenvalues are known, the nonzero eigenvalues of $A_{n,r}$ are obtained by taking $n$th roots.
In particular, the nonzero eigenvalues occur in $\exp(2\pi i/n)$-orbits and hence form regular $n$-gons in the complex plane.

With the $n$-cyclic block form in Proposition~\ref{prop:ncyclic-form}, each block $B_{n,r}^{(c)}$ records the edges from
$V_{n,r}^{(c)}$ to $V_{n,r}^{(c+1)}$ (indices modulo $n$).
We define the cyclic product core as the $n$-step return map on a single layer,
\[
K_{n,r}:=B_{n,r}^{(0)}B_{n,r}^{(1)}\cdots B_{n,r}^{(n-1)}.
\]

The following proposition describes the reduction to the core $K_{n,r}$ at the level of nonzero eigenvalues and characteristic polynomials.
\begin{proposition}[Cyclic product reduction]\label{prop:charpoly-reduction}
The nonzero eigenvalues of $A_{n,r}$ are obtained by taking $n$th roots of the nonzero eigenvalues of $K_{n,r}$, with algebraic multiplicities preserved.
Equivalently, there exists an integer $\nu_{n,r}\ge0$ such that
\[
\Phi_{n,r}(x)=x^{\nu_{n,r}}\det(x^nI-K_{n,r}).
\]
\end{proposition}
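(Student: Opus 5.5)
The plan is to compute $\det(xI-B_{n,r})$ directly from the $n$-cyclic block form of Proposition~\ref{prop:ncyclic-form}, using the standard block-cyclic determinant identity. Since $B_{n,r}=S_{n,r}A_{n,r}S_{n,r}^{-1}$, we have $\Phi_{n,r}(x)=\det(xI-B_{n,r})$. Write $m_c:=|V^{(c)}_{n,r}|$, so $B^{(c)}_{n,r}$ is $m_c\times m_{c+1}$ and $K_{n,r}$ is $m_0\times m_0$. The target identity is
\[
\det(xI-B_{n,r})=x^{N-nm_0}\det(x^nI-K_{n,r}),\qquad N=\sum_c m_c,
\]
valid as a polynomial identity. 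We then set $\nu_{n,r}:=N-nm_0$. If $N-nm_0$ happened to be negative, the identity would still hold as a rational identity, so we must check nonnegativity separately.

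For the identity, work over $\mathbb{C}(x)$ with $x\ne0$ and eliminate the blocks successively by Schur complements. Write the block equation $(xI-B)v=w$ layer by layer: $xv_c-B^{(c)}v_{c+1}=w_c$ for $c=0,\dots,n-1$ (indices mod $n$). Starting from the last layer, the upper-left part of $xI-B$ obtained by deleting layer $0$ is block upper bidiagonal with diagonal blocks $xI_{m_c}$, $c=1,\dots,n-1$. Its determinant is therefore $x^{m_1+\cdots+m_{n-1}}$, and its inverse is an explicit Neumann-type sum. The Schur complement onto layer $0$ is
\[
xI_{m_0}-x^{-(n-1)}B^{(0)}B^{(1)}\cdots B^{(n-1)}=x^{-(n-1)}\bigl(x^nI-K_{n,r}\bigr).
\]
To see this, note that the only path from layer $0$ back to itself through the deleted layers is the full cycle, which picks up one factor $x^{-1}$ per intermediate layer. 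Hence
\[
\det(xI-B)=x^{N-m_0}\,x^{-(n-1)m_0}\det(x^nI-K)=x^{N-nm_0}\det(x^nI-K).
\]
Both sides are rational functions agreeing for all nonzero $x$, so the identity holds in $\mathbb{C}(x)$.

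Nonnegativity of $\nu_{n,r}$ is the only subtle step, and it follows cleanly. The left side is a polynomial, and $\det(x^nI-K)$ is monic of degree $nm_0$ with constant term $\pm\det K$. If $N-nm_0<0$, the identity would force $x^{nm_0-N}$ to divide $\det(x^nI-K)$. Comparing degrees then gives $\det(xI-B)$ degree $N$, which is consistent, so degree alone gives no contradiction. A cleaner route is to choose the layer $c^\ast$ with minimal $m_c$ and run the same computation with the cyclic rotation $K'=B^{(c^\ast)}\cdots B^{(c^\ast-1)}$. This gives exponent $N-nm_{c^\ast}\ge0$. The nonzero spectra of all cyclic rotations coincide with multiplicities, because $\det(tI-XY)$ and $\det(tI-YX)$ differ only by a power of $t$. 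So $\det(x^nI-K')$ and $\det(x^nI-K)$ differ by a factor $x^{n(m_{c^\ast}-m_0)}$, and we conclude $\Phi_{n,r}=x^{N-nm_0}\det(x^nI-K_{n,r})$ with $N-nm_0\ge0$. Alternatively, $N-nm_0\ge0$ already follows because the left side is a polynomial of degree $N$ whose factor $\det(x^nI-K)$ has degree $nm_0$. If $N-nm_0<0$, then $x^{nm_0-N}\Phi=\det(x^nI-K)$, which forces $\det(x^nI-K)$ to vanish at $0$ to that order; we instead use the minimal-layer rotation, which avoids the issue entirely.

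Finally, for the eigenvalue statement, factor $\det(x^nI-K)=\prod_j(x^n-\mu_j)$ over the eigenvalues $\mu_j$ of $K$. For each $\mu_j\ne0$, the factor $x^n-\mu_j$ splits into the $n$ distinct $n$th roots of $\mu_j$. Factors with $\mu_j=0$ contribute only to the power of $x$. Hence the nonzero roots of $\Phi_{n,r}$, counted with multiplicity, are exactly the $n$th roots of the nonzero eigenvalues of $K_{n,r}$, and the multiplicities are preserved.
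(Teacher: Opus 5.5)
Your Schur-complement computation is correct. Eliminating layers $1,\dots,n-1$ gives the exact identity $\Phi_{n,r}(x)=x^{N-nm_0}\det(x^nI-K_{n,r})$ in $\mathbb{C}(x)$. From it the eigenvalue statement follows as you say: each $n$th root of $\mu\neq0$ inherits the multiplicity of $\mu$, because $x^n-\mu$ has $n$ distinct roots. This is tighter than the paper's route through the block-diagonal $B_{n,r}^n$ and the $XY$/$YX$ lemma. That route only controls $n$th powers, and it implicitly needs the similarity $B_{n,r}\sim e^{2\pi i/n}B_{n,r}$ to spread multiplicities evenly over the $n$ roots.

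The gap is the nonnegativity of $N-nm_0$. Running the computation at a smallest layer $c^\ast$ does give a nonnegative exponent for $K'$. But converting back via $\det(tI-K')=t^{m_{c^\ast}-m_0}\det(tI-K_{n,r})$ multiplies by $x^{n(m_{c^\ast}-m_0)}$. That returns exactly the exponent $N-nm_0$, with no information about its sign. Your ``alternatively'' argument presupposes that $\det(x^nI-K_{n,r})$ divides $\Phi_{n,r}$, which is the point at issue.

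This step cannot be repaired, because $N-nm_0$ is sometimes negative. For $(n,r)=(3,3)$ the layers are $V^{(0)}=\{1,5\}$, $V^{(1)}=\{2,4\}$, $V^{(2)}=\{3\}$, so $N=5<6=3m_0$. Moreover
\[
K_{3,3}=\begin{bmatrix}2&2\\1&1\end{bmatrix},\qquad
\det(x^3I-K_{3,3})=x^3(x^3-3),\qquad
\Phi_{3,3}(x)=x^5-3x^2=x^2(x^3-3),
\]
so $\Phi_{3,3}=x^{-1}\det(x^3I-K_{3,3})$ and no $\nu_{3,3}\ge0$ exists. The same happens at $(3,6)$ and $(4,4)$. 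In general, counting which layers each gluing step misses gives $N-nm_0=n-2s$ with $s=(r-1)\bmod n$ when $s\neq0$, and $0$ when $s=0$. This is negative whenever $s>n/2$.

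So the first sentence of the proposition is true, but the ``equivalent'' factorization with $\nu_{n,r}\ge0$ fails in general. The paper's own proof does not address this either; it merely asserts the equivalence. What your argument does establish is one of two things: the identity with integer exponent $N-n|V^{(0)}_{n,r}|$, which may be negative, or a genuine polynomial factorization once $K_{n,r}$ is replaced by the cyclic core on a smallest layer.
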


\begin{proof}
Since \(A_{n,r}\) is permutation similar to \(B_{n,r}\), it suffices to work with \(B_{n,r}\).
By Proposition~\ref{prop:ncyclic-form}, the matrix \(B_{n,r}^{\,n}\) is block diagonal, and its diagonal blocks are the cyclic products
\[
K_{n,r}^{(c)}:=B_{n,r}^{(c)}B_{n,r}^{(c+1)}\cdots B_{n,r}^{(c-1)},
\]
where the superscripts are read modulo \(n\).
These blocks have the same nonzero spectrum, with the same algebraic multiplicities, since cyclic products \(XY\) and \(YX\) have the same nonzero spectrum; see, e.g., \cite[Theorem~1.3.22]{HornJohnson2013}.
In particular, each \(K_{n,r}^{(c)}\) has the same nonzero spectrum as \(K_{n,r}\).
Therefore the nonzero eigenvalues of \(B_{n,r}^{\,n}\) are exactly those of \(K_{n,r}\), each repeated \(n\) times.
Hence the nonzero eigenvalues of \(B_{n,r}\) are exactly the \(n\)th roots of the nonzero eigenvalues of \(K_{n,r}\), with algebraic multiplicities preserved.
This is equivalent to the displayed factorization of \(\Phi_{n,r}\).
\end{proof}

This reduction isolates the nonzero spectrum in the core $K_{n,r}$, while the factor $x^{\nu_{n,r}}$ records only part of the contribution of the zero eigenvalue.
The next remark explains this distinction.

\begin{remark}[Zero multiplicity vs.\ the reduction exponent]\label{rem:nu-vs-ord}
We write $\ord_x\Phi$ for the order of vanishing of $\Phi\in\mathbb Z[x]$ at $x=0$.
From Proposition~\ref{prop:charpoly-reduction},
\[
\Phi_{n,r}=x^{\nu_{n,r}}\det(x^nI-K_{n,r}),
\qquad\text{hence}\qquad
\ord_x\Phi_{n,r}=\nu_{n,r}+\ord_x\det(x^nI-K_{n,r}).
\]
The second term records the contribution of the zero eigenvalue of $K_{n,r}$, and therefore comes in multiples of $n$.
In particular, $\nu_{n,r}$ is a \emph{reduction exponent} rather than the full zero multiplicity; the exact value of
$\ord_x\Phi_{n,r}$ will be pinned down later by the $r$-recurrence.
\end{remark}

\section{Total nonnegativity and simplicity of the nonzero spectrum}\label{sec:TN-simplicity}

By the cyclic reduction in Section~\ref{subsec:reduction}, it suffices to control the core $K_{n,r}$; the nonzero spectrum of
$A_{n,r}$ is then obtained by an $n$th-root lift.
We prove that $K_{n,r}$ is totally nonnegative and irreducible; hence the nonzero eigenvalues of $K_{n,r}$ are positive and simple.
Proposition~\ref{prop:charpoly-reduction} then transfers this simplicity to $A_{n,r}$.

\smallskip
We keep the notation from Proposition~\ref{prop:ncyclic-form}.

\subsection{The block factors are totally nonnegative}\label{subsec:blocks-TN}

The $n$-cyclic normal form does more than reorganize indices:
it turns the combinatorics of each gluing step into a monotone incidence pattern.
This is precisely the entry point of total nonnegativity.

\begin{proposition}[Total nonnegativity of the core]\label{prop:K-TN}
The cyclic product $K_{n,r}$ is totally nonnegative, i.e., every square subdeterminant of $K_{n,r}$ is nonnegative.
\end{proposition}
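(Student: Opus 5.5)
The plan is to show that each block factor $B_{n,r}^{(c)}$ is totally nonnegative, with rows and columns ordered increasingly by vertex label as in $\sigma_{n,r}$. Then $K_{n,r}=B_{n,r}^{(0)}\cdots B_{n,r}^{(n-1)}$ is totally nonnegative by the Cauchy--Binet formula: every minor of a product is a sum of products of minors of the factors, hence nonnegative. The subsection title suggests this is exactly how the statement is meant to be reached, so the proof splits into a structural description of the blocks and a standard closure argument.

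\emph{Structure of the blocks.} Each $B_{n,r}^{(c)}$ is a $(0,1)$-matrix. Inspecting the recursive definition, every vertex has out-degree at most $2$ and in-degree at most $2$. The gluing step $F_i$ attaches a new path to the shared edge of the previous cycle: its entrance leaves vertex $in-2i+2$ and its exit returns to vertex $in+n-2i$. Consequently the new vertices in each layer get labels larger than all older vertices in that layer. The aim is to show that in each block the rows' supports are intervals of consecutive columns (in the layer order), and that both the left and the right endpoints of these intervals are weakly increasing down the rows. Said differently, each block is a ``staircase'' or double-echelon $(0,1)$ matrix, typically bidiagonal-like with a possible zero row or column.

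I would establish this by induction on $r$. Passing from $\Gamma_{n,r}$ to $\Gamma_{n,r+1}$ adds one new vertex in each of $n-2$ layers, appended at the end of those layers. It also adds new nonzero entries only in the last rows and columns: the entrance edge from the most recent ``corner'' vertex, and the exit edge into the other endpoint of the shared edge. The check is that this corner vertex and this exit target are the last, or next-to-last, vertices of their layers, so monotonicity of the interval endpoints is preserved. I expect this to be the main obstacle. It requires tracking the layer indices $\ell$ of the labels $in-2i+2$ and $in+n-2i$ modulo $n$, and verifying that the staircase shift between consecutive cycles advances layers consistently. This is where the alternating orientation matters. I would first compute small cases ($n=3,4$, $r\le 4$) to guess the exact shape of each $B^{(c)}$, then prove it in general.

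\emph{Total nonnegativity of staircase matrices.} Take a $(0,1)$ matrix whose rows are intervals with nondecreasing left and right endpoints. Any square submatrix inherits this property. Such a submatrix either has a repeated row pattern (determinant $0$) or can be reduced by subtracting an earlier row from a later one that dominates it on the left. Doing so preserves the interval structure and lets us expand along a first column with a single $1$ in the top position, which gives determinant $\ge0$ by induction on size. Alternatively, I would factor such a matrix as a product of elementary bidiagonal nonnegative matrices and a diagonal $(0,1)$ matrix, and invoke the standard fact that these are totally nonnegative. With each $B_{n,r}^{(c)}$ totally nonnegative, Cauchy--Binet finishes the proof.
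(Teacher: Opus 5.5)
Your overall architecture (each block $B^{(c)}_{n,r}$ is totally nonnegative, then Cauchy--Binet) matches the paper's, and your description of how each gluing step appends entries at the ends of the layer orders is essentially correct. The gap is the lemma in your last paragraph. A $(0,1)$-matrix whose rows are intervals with nondecreasing left and right endpoints is \emph{not} totally nonnegative in general. For example,
\[
\begin{bmatrix}1&1&0\\1&1&1\\0&1&1\end{bmatrix}
\]
has row supports $\{1,2\},\{1,2,3\},\{2,3\}$, so it is double echelon in exactly your sense, but its determinant is $-1$. Your reduction fails on this matrix. Subtracting row $1$ from row $2$ gives $(0,0,1)$, whose left endpoint $3$ exceeds that of row $3$, so monotonicity is lost and the induction hypothesis no longer applies. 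Expanding along the first column then leaves $\det\begin{bmatrix}0&1\\1&1\end{bmatrix}=-1$. For the same reason, your alternative factorization into nonnegative elementary bidiagonal factors cannot exist for this class, since any such product would be totally nonnegative.

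The missing ingredient is a bound on the length of the row supports. You already state it but never use it: every vertex has out-degree at most $2$, so every row of $B^{(c)}_{n,r}$ has at most two ones, and the row supports are intervals of length $1$ or $2$. With this restriction your induction goes through, because the complementary minors keep the same shape up to zero rows:
\begin{enumerate}
\item If the first row of a square submatrix is zero or misses column $1$, the determinant vanishes. In the latter case column $1$ is zero by monotonicity.
\item If the first row is $e_1$, expand along it.
\item If the first row is supported on $\{1,2\}$, any other row meeting column $1$ has left endpoint $1$, and its right endpoint is forced to be $2$. It therefore repeats the first row, so either the determinant is $0$ or column $1$ has a single $1$ and you expand along it.
\end{enumerate}
This is the same information the paper uses. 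There it is phrased as: each block is supported on two adjacent diagonals, i.e., is bidiagonal, and nonnegative bidiagonal matrices are totally nonnegative because every minor is zero or a product of entries. So you must establish and invoke the length-$\le 2$ (bidiagonal) shape of the blocks, not merely the double-echelon shape.
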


\begin{proof}
Fix $c$. With vertices listed in increasing order within each layer, the block $B^{(c)}_{n,r}$ records edges from
$V^{(c)}_{n,r}$ to $V^{(c+1)}_{n,r}$.
We claim that, after all gluing steps, the support of $B^{(c)}_{n,r}$ is contained in at most two adjacent diagonals.

Indeed, in each gluing step, the new vertices form a directed path whose incidences respect the increasing order in both layers,
so the new path contributes a single diagonal string of ones.
The closing edge attaches at an endpoint of the newly appended segment, hence it can create at most one additional incidence,
lying on a diagonal adjacent to the one coming from the path.
Because the recursive construction always places newly created vertices after the previously existing ones in each layer order, every gluing step only enlarges the current bidiagonal support at its boundary.
Therefore $B^{(c)}_{n,r}$ is supported on at most two adjacent diagonals and is thus bidiagonal.

Every bidiagonal $0$--$1$ matrix is totally nonnegative: every square submatrix is either zero or triangular with nonnegative diagonal, hence has nonnegative determinant.
Since total nonnegativity is preserved under products by the Cauchy--Binet formula; see, e.g., \cite[Theorem~1.1.2]{FallatJohnson2011},
the cyclic product $K_{n,r}=B^{(0)}_{n,r}\cdots B^{(n-1)}_{n,r}$ is totally nonnegative.
\end{proof}

We next prove that $K_{n,r}$ is irreducible, so the simplicity theorem applies.

\subsection{Irreducibility}\label{subsec:irr-inv}

A square matrix is called irreducible if it cannot be transformed into block upper triangular form by a simultaneous permutation of rows and columns.
A digraph is called strongly connected if, for every ordered pair of vertices, there exists a directed walk from the first to the second.
The link between these two notions is the key point in what follows.

\begin{lemma}[Strong connectivity of $\Gamma_{n,r}$]\label{lem:Gamma-SC}
The digraph $\Gamma_{n,r}$ is strongly connected.
\end{lemma}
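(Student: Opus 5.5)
We argue by induction on $r$, using the recursive description of $E(\Gamma_{n,r})$.

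For the base case $r=1$, the edge set $E(\Gamma_{n,1})$ is the directed $n$-cycle $1\to2\to\cdots\to n\to1$. Every vertex lies on this cycle, so any ordered pair of vertices is joined by a directed walk along it. Hence $\Gamma_{n,1}$ is strongly connected.

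For the inductive step, assume $\Gamma_{n,r-1}$ is strongly connected, with $r\ge2$. By construction, $\Gamma_{n,r}$ is obtained from $\Gamma_{n,r-1}$ by adding one new block. This is $E(\Gamma_{n,2})\setminus E(\Gamma_{n,1})$ when $r=2$, and $F_{r-2}$ when $r\ge3$. In either case the new block has the same shape:
\begin{itemize}
\item an entrance edge $(u_0,w_1)$ with $u_0$ an old vertex;
\item a directed path $w_1\to w_2\to\cdots\to w_{n-2}$ through the $n-2$ new vertices;
\item a closing edge $(w_{n-2},v_0)$ with $v_0$ an old vertex.
\end{itemize}
For $r=2$ this reads $u_0=1$, $w_j=n+j-1$, $v_0=n$. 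For $F_i$ it reads $u_0=in-2i+2$, $w_1=in+n-2i+1$, $w_{n-2}=in+2n-2i-2$, $v_0=in+n-2i$.

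The only bookkeeping is checking these two facts:
\begin{itemize}
\item the new vertices are exactly $w_1,\dots,w_{n-2}$, so the vertex count rises by $n-2$, matching $|V(\Gamma_{n,r})|=rn-2r+2$;
\item $u_0$ and $v_0$ are already vertices of $\Gamma_{n,r-1}$, i.e.\ $\le (r-1)n-2(r-1)+2$.
\end{itemize}
Both follow by direct comparison of indices. For $n=3$ the path is the single vertex $w_1$, and the argument is unchanged.

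Now take any two vertices $a,b$ of $\Gamma_{n,r}$. We use the inductive hypothesis inside the old subdigraph, which is legitimate because old edges are retained.
\begin{itemize}
\item If $a,b$ are both old, a walk exists inside $\Gamma_{n,r-1}$.
\item If $a=w_j$ is new, follow the path to $w_{n-2}$, take the closing edge to $v_0$, and then reach any old vertex by induction. To reach a new vertex $w_k$ instead, continue from $v_0$ to $u_0$ by induction and enter via $(u_0,w_1)$, then walk along the path to $w_k$.
\item If $a$ is old and $b=w_k$, go from $a$ to $u_0$ inside $\Gamma_{n,r-1}$, then use $u_0\to w_1\to\cdots\to w_k$.
\end{itemize}
This gives a directed walk between every ordered pair, so $\Gamma_{n,r}$ is strongly connected.

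The only potential obstacle is the index verification that each $F_i$ really attaches to previously existing vertices and introduces exactly the new vertices listed. This reduces to the inequalities $in-2i+2\le (i+1)n-2(i+1)+2$ and $in+n-2i\le (i+1)n-2(i+1)+2$, both of which are immediate.
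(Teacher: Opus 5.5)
Your proof is correct, but it takes a different route from the paper. The paper argues globally. It views $\Gamma_{n,r}$ as a chain of directed $n$-cycles $C_1,\dots,C_r$, with $C_{k+1}$ glued to $C_k$ along a shared edge $(a_k,b_k)$. It then builds a walk from $u\in C_i$ to $v\in C_j$ by moving around inside each cycle and crossing to the next cycle through the shared edge; the case $i>j$ is declared analogous.

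You instead induct on $r$. You observe that each gluing step attaches a directed ear $u_0\to w_1\to\cdots\to w_{n-2}\to v_0$ whose endpoints both lie in the old digraph, and the standard ear argument shows this preserves strong connectivity. Your version works directly from the recursive definition of $E(\Gamma_{n,r})$: you check from the indices that the new vertices are exactly the $w_j$ and that $u_0,v_0$ are old. The paper, by contrast, assumes the chain-of-cycles description (which cycles, which shared edges) without deriving it from the edge sets. You also never use the shared edge $(v_0,u_0)$ itself, since the inductive hypothesis supplies a walk from $v_0$ back to $u_0$. The paper's argument is shorter, follows the geometric picture of glued $n$-cycles, and needs no induction.

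One clerical slip: for $r=2$ the new path is $n+1\to n+2\to\cdots\to 2n-2$, so $w_j=n+j$ rather than $n+j-1$. Your formula would make $w_1=n$ an old vertex. This does not affect the argument.
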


\begin{proof}
View $\Gamma_{n,r}$ as a chain of directed $n$-cycles $C_1,\dots,C_r$, where $C_{k+1}$ is glued to $C_k$ along a shared directed edge $(a_k,b_k)$ for $k=1,\dots,r-1$.
Fix $u\in V(C_i)$ and $v\in V(C_j)$.
Since each $C_k$ is strongly connected, there is a directed walk inside $C_i$ from $u$ to $a_i$.
If $i<j$, traverse $(a_i,b_i)$ to enter $C_{i+1}$ at $b_i$, and iterate along the chain until reaching $C_j$; then move inside $C_j$ to $v$.
(The case $i>j$ is analogous.)
Concatenating these walks gives a directed walk $W$ in $\Gamma_{n,r}$ from $u$ to $v$, so $\Gamma_{n,r}$ is strongly connected.
\end{proof}

\begin{proposition}[Irreducibility of the core]\label{prop:K-irreducible}
The matrix $K_{n,r}$ is irreducible.
\end{proposition}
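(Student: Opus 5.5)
We argue via the digraph of $K_{n,r}$, reading its positive entries as walks in $\Gamma_{n,r}$. Since $B_{n,r}$ is a nonnegative $0$--$1$ matrix in the $n$-cyclic form of Proposition~\ref{prop:ncyclic-form}, the $(u,v)$-entry of $K_{n,r}=B^{(0)}_{n,r}\cdots B^{(n-1)}_{n,r}$, for $u,v\in V^{(0)}_{n,r}$, equals the number of directed walks of length exactly $n$ from $u$ to $v$ in $\Gamma_{n,r}$. Such a walk passes through the layers $0,1,\dots,n-1$ and returns to layer $0$. Likewise, $(K_{n,r}^m)_{uv}$ counts walks of length $mn$ from $u$ to $v$. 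Irreducibility of the nonnegative matrix $K_{n,r}$ is equivalent to strong connectivity of its associated digraph on $V^{(0)}_{n,r}$. So it suffices to show that for every ordered pair $u,v\in V^{(0)}_{n,r}$ there is some $m\ge1$ with $(K_{n,r}^m)_{uv}>0$.

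Fix $u,v\in V^{(0)}_{n,r}$. By Lemma~\ref{lem:Gamma-SC} there is a directed walk $W$ in $\Gamma_{n,r}$ from $u$ to $v$. By Lemma~\ref{lem:layer-compat} each edge raises the layer index by exactly $1$ modulo $n$. Since $\ell(u)=\ell(v)=0$, the length of $W$ is therefore a multiple of $n$, say $mn$. If $u=v$, or if $W$ happens to have length $0$, replace $W$ by a directed cycle through $u$; one exists because $u$ lies on an $n$-cycle $C_i$. This guarantees $m\ge1$. Then $W$ contributes $1$ to $(B_{n,r}^{mn})_{uv}$. Because $B_{n,r}^{n}$ is block diagonal with $(0,0)$ block $K_{n,r}$, we have $(B_{n,r}^{mn})_{uv}=(K_{n,r}^m)_{uv}$, so $(K_{n,r}^m)_{uv}\ge1$.

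This gives a path from $u$ to $v$ in the digraph of $K_{n,r}$ for every ordered pair, so $K_{n,r}$ is irreducible. The $1\times1$ case, where $V^{(0)}_{n,r}$ is a single vertex, needs no separate treatment: the cycle argument shows $K_{n,r}$ is then a positive scalar, which is irreducible.

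The only step that needs care is the bookkeeping identity $(B_{n,r}^{mn})|_{V^{(0)}\times V^{(0)}}=K_{n,r}^m$. It follows directly from the block-diagonal structure of $B_{n,r}^n$ noted in the proof of Proposition~\ref{prop:charpoly-reduction}. Everything else is the standard equivalence between irreducibility and strong connectivity of the associated digraph (e.g.\ \cite{HornJohnson2013}). I do not expect a genuine obstacle here.
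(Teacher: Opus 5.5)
Your proposal is correct and follows essentially the paper's own argument: strong connectivity of $\Gamma_{n,r}$ together with the layer-increment lemma forces a walk between two layer-$0$ vertices to have length divisible by $n$, and such a walk yields a path in the digraph of $K_{n,r}$. The only difference is bookkeeping. You use the identity $(K_{n,r}^m)_{uv}=(B_{n,r}^{mn})_{uv}$, whereas the paper cuts $W$ into length-$n$ subwalks, each giving a positive entry of $K_{n,r}$. You also treat the $u=v$ and $1\times 1$ case (e.g.\ $r=1$) explicitly, which the paper passes over.
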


\begin{proof}
Let \(G_{n,r}\) be the digraph whose vertex set is \(V^{(0)}_{n,r}\), and in which there is a directed edge \((u,v)\) precisely when the \((u,v)\)-entry of \(K_{n,r}\) is positive.

Let \(u,v\in V^{(0)}_{n,r}\).
By Lemma~\ref{lem:Gamma-SC}, there is a directed walk \(W\) in \(\Gamma_{n,r}\) from \(u\) to \(v\).
By Lemma~\ref{lem:layer-compat}, each edge advances the layer by \(1\) modulo \(n\). Hence the length of \(W\) is a multiple of \(n\).
Partition \(W\) into subwalks of length \(n\).
Each such subwalk starts and ends in \(V^{(0)}_{n,r}\), and therefore determines a length-\(n\) directed walk through the cyclic sequence of layers.
Equivalently, the corresponding \((u,v)\)-entry of
$K_{n,r}=B^{(0)}_{n,r}\cdots B^{(n-1)}_{n,r}$
is positive, since this entry counts length-\(n\) directed walks from \(u\) to \(v\) that pass once through the cyclic sequence of layers.
Thus \(G_{n,r}\) contains a directed walk from \(u\) to \(v\).
Since \(u\) and \(v\) were arbitrary, \(G_{n,r}\) is strongly connected.
Hence \(K_{n,r}\) is irreducible by \cite[Theorem~6.2.24]{HornJohnson2013}.
\end{proof}

\subsection{Simple nonzero spectrum}\label{subsec:osc-simple}

Total nonnegativity controls signs but not separation.
Irreducibility is the missing spark: it forces the positive spectrum to split into simple eigenvalues.

\begin{theorem}[Positivity and simplicity of the nonzero spectrum]\label{thm:K-pos-simple}
Every nonzero eigenvalue of $K_{n,r}$ is real, positive, and simple.
\end{theorem}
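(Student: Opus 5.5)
The plan is to combine Proposition~\ref{prop:K-TN} and Proposition~\ref{prop:K-irreducible} with the classical theory of totally nonnegative matrices. The relevant result is Gantmacher--Krein, in the form given in \cite{FallatJohnson2011}: an irreducible totally nonnegative matrix is \emph{oscillatory} up to a reduction, and the nonzero eigenvalues of a totally nonnegative matrix are real and nonnegative.

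The first step is realness and nonnegativity. For every $k$, the $k$th compound matrix $C_k(K_{n,r})$ is entrywise nonnegative by Proposition~\ref{prop:K-TN}. Its eigenvalues are the products $\lambda_{i_1}\cdots\lambda_{i_k}$ of eigenvalues of $K_{n,r}$. Order the eigenvalues of $K_{n,r}$ by decreasing modulus. Perron--Frobenius applied to each compound shows that $\lambda_1\cdots\lambda_k$ can be taken real and nonnegative. The cleanest route is the standard argument that a TN matrix is a limit of totally positive matrices; I would cite \cite[Ch.~5]{FallatJohnson2011} directly for the conclusion that all eigenvalues of a TN matrix are real and nonnegative. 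Nonzero eigenvalues are therefore positive.

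The second step, simplicity, is the main obstacle, since TN alone allows repeated eigenvalues (consider the identity matrix). Here irreducibility enters. An irreducible TN matrix has all entries on the super- and subdiagonals positive; this is the Fallat--Johnson characterization, whose proof uses that a zero entry $a_{i,i+1}=0$ together with TN forces a zero block $a_{jk}=0$ for $j\le i<k$, contradicting irreducibility. Fallat--Gekhtman--Johnson then show that an irreducible TN matrix of rank $p$ has $p$ distinct positive eigenvalues; equivalently, its nonzero spectrum is simple. I plan to invoke this theorem (\cite[Theorem~5.6.x]{FallatJohnson2011}) as stated.

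If a self-contained argument is preferred, I would proceed as follows. Let $p=\mathrm{rank}\,K_{n,r}$. For each $k\le p$, the compound $C_k(K_{n,r})$ is nonnegative, and I would show it has an irreducible block containing a positive principal minor. Perron--Frobenius then gives strict inequalities $\lambda_1\cdots\lambda_k>|\lambda_1\cdots\lambda_{k-1}\lambda_{k+1}|$, which yields $\lambda_k>\lambda_{k+1}$ whenever $\lambda_k>0$. Verifying the irreducibility of the relevant part of the compounds is exactly the delicate point, and I expect to rely on the cited theorem rather than reprove it.
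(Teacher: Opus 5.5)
Your proposal is correct and is essentially the paper's proof, which likewise feeds Propositions~\ref{prop:K-TN} and~\ref{prop:K-irreducible} into the Fallat--Gekhtman--Johnson theorem that an irreducible totally nonnegative matrix has positive, pairwise distinct nonzero eigenvalues (the paper cites it as Theorem~5.5.18(1) of Fallat--Johnson), so your separate realness step and the compound-matrix sketch are not needed. One correction to how you quote that theorem: the number of nonzero eigenvalues equals the principal rank, not the rank (the entrywise positive TN matrix with rows $(1,1,1,1),(1,1,1,1),(1,2,3,3),(1,2,4,4)$ has rank $3$ but spectrum $\{8,1,0,0\}$), though only distinctness is needed here.
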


\begin{proof}
By Proposition~\ref{prop:K-TN}, the matrix $K_{n,r}$ is totally nonnegative, and by
Proposition~\ref{prop:K-irreducible} it is irreducible. Hence, by \cite[Theorem~5.5.18(1)]{FallatJohnson2011},
all nonzero eigenvalues of $K_{n,r}$ are positive and distinct.
\end{proof}

By Proposition~\ref{prop:charpoly-reduction}, the nonzero eigenvalues of $A_{n,r}$ are obtained by taking $n$th roots
of the eigenvalues of $K_{n,r}$, counted with algebraic multiplicity.
Since each nonzero eigenvalue of $K_{n,r}$ is simple by Theorem~\ref{thm:K-pos-simple}, the same holds for the nonzero eigenvalues of $A_{n,r}$.

\begin{corollary}[Simplicity of the nonzero spectrum]\label{cor:simple-nonzero}
Every nonzero eigenvalue of $A_{n,r}$ is simple.
\end{corollary}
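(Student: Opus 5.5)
The corollary follows by combining Proposition~\ref{prop:charpoly-reduction} with Theorem~\ref{thm:K-pos-simple}, working at the level of the characteristic polynomial. By Proposition~\ref{prop:charpoly-reduction} we have $\Phi_{n,r}(x)=x^{\nu_{n,r}}\det(x^nI-K_{n,r})$. By Theorem~\ref{thm:K-pos-simple}, the nonzero eigenvalues $\mu_1,\dots,\mu_m$ of $K_{n,r}$ are real, positive and pairwise distinct. Therefore
\[
\det(x^nI-K_{n,r})=x^{nz}\prod_{j=1}^{m}(x^n-\mu_j),
\]
where $z$ is the algebraic multiplicity of $0$ as an eigenvalue of $K_{n,r}$.

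Next I will show that the nonzero roots of $\Phi_{n,r}$ are simple. The nonzero roots of $\Phi_{n,r}$ are exactly the roots of $\prod_j (x^n-\mu_j)$. Each factor $x^n-\mu_j$ with $\mu_j>0$ has the $n$ distinct roots $\mu_j^{1/n}\exp(2\pi i k/n)$, $k=0,\dots,n-1$; its derivative $nx^{n-1}$ does not vanish at any of them. Roots coming from different factors are also distinct, since $\lambda^n=\mu_j$ and $\lambda^n=\mu_{j'}$ force $\mu_j=\mu_{j'}$. Hence $\prod_j(x^n-\mu_j)$ has $nm$ distinct simple roots. Multiplying by a power of $x$ adds no nonzero roots, so every nonzero root of $\Phi_{n,r}$ is simple, i.e., every nonzero eigenvalue of $A_{n,r}$ has algebraic multiplicity one.

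The only step needing care is the transfer of multiplicities. This is exactly what the factorization in Proposition~\ref{prop:charpoly-reduction} guarantees, so I do not expect a real obstacle here. Geometric multiplicity is then automatically $1$ as well, since it is at most the algebraic multiplicity. As a by-product, the nonzero spectrum is the disjoint union of $m$ regular $n$-gons, each with a vertex $\mu_j^{1/n}$ on the positive real axis.
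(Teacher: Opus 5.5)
Your proposal is correct and follows the paper's own argument. It combines the factorization of Proposition~\ref{prop:charpoly-reduction} with the positivity and simplicity of the nonzero eigenvalues of $K_{n,r}$ from Theorem~\ref{thm:K-pos-simple}, and your check that the $nm$ roots $\mu_j^{1/n}e^{2\pi i k/n}$ are pairwise distinct just spells out the multiplicity transfer the paper states in one line. Your argument uses that factorization only as an identity of rational functions, so it also survives the fact that $\nu_{n,r}$ can be negative: for $(n,r)=(4,4)$ the layers have sizes $3,3,2,2$, so $\Phi_{4,4}=x^{-2}\det(x^4I-K_{4,4})$.
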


\section{A recursion for $\Phi_{n,r}$ and zero/nonzero spectral counts}\label{sec:recursion-and-counts}

The cyclic reduction in Sections~\ref{sec:cyclic-block-form}--\ref{sec:TN-simplicity} turns spectral questions for $A_{n,r}$ into questions about the characteristic polynomials
\[
\Phi_{n,r}(x)=\det(xI-A_{n,r}).
\]
In this section we derive a three-term recursion in the gluing parameter $r$ from a single local determinant computation.
The recursion serves as our bookkeeping tool: it determines the multiplicity of the zero eigenvalue, namely $\ord_x\Phi_{n,r}$, and, via
\[
\deg\Phi_{n,r}=|V(\Gamma_{n,r})|=rn-2r+2,
\]
also the number of nonzero eigenvalues.
We keep in mind that $\nu_{n,r}$ from Proposition~\ref{prop:charpoly-reduction} is a reduction exponent,
whereas the zero multiplicity is $\ord_x\Phi_{n,r}$.

\subsection{A three-term recursion in the gluing parameter}\label{subsec:Phi-rec}

One gluing step changes the graph only locally, but its characteristic polynomial remembers the change globally.

\begin{lemma}[A three-term recursion]\label{lem:Phi-rec}
For $r\ge4$, the characteristic polynomials satisfy
\[
\Phi_{n,r}=x^{\,n-2}\Phi_{n,r-1}-x^{\,2(n-3)}\Phi_{n,r-3}.
\]
\end{lemma}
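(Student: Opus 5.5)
The plan is to read the recursion off the coefficient (linear subdigraph) expansion of the characteristic polynomial of a digraph:
\[
\Phi_{n,r}(x)=\sum_{L}(-1)^{c(L)}x^{\,|V(\Gamma_{n,r})|-|V(L)|},
\]
where $L$ runs over collections of pairwise vertex-disjoint directed cycles and $c(L)$ is the number of cycles in $L$. This is the combinatorial form of the one-step Schur complement: eliminating the $n-2$ new path vertices of the last gluing step has the same effect. For this expansion to give a three-term recursion, I need two structural facts about $\Gamma_{n,r}$.

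\emph{Fact (a):} the directed cycles of $\Gamma_{n,r}$ are exactly the $r$ glued $n$-cycles $C_1,\dots,C_r$.

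\emph{Fact (b):} $C_k$ and $C_j$ share a vertex if and only if $|k-j|\le2$.

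Given (a) and (b), I would split the linear subdigraphs $L$ of $\Gamma_{n,r}$ according to whether they contain $C_r$.

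\textbf{Case $C_r\notin L$.} Then $L$ is a linear subdigraph of $\Gamma_{n,r-1}$. The $n-2$ vertices added at step $r$ are never covered, so these $L$ contribute $x^{n-2}\Phi_{n,r-1}$.

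\textbf{Case $C_r\in L$.} By (b), the remaining cycles avoid $C_{r-1}$ and $C_{r-2}$. By (a), they therefore form an arbitrary linear subdigraph of $\Gamma_{n,r-3}$. By (b) again, $C_r$ is disjoint from $\Gamma_{n,r-3}$, so it lies entirely inside the $3(n-2)$ vertices added in steps $r-2,r-1,r$. The vertices outside $\Gamma_{n,r-3}\cup C_r$ then number $3(n-2)-n=2(n-3)$. The extra cycle contributes the sign $-1$, so these $L$ contribute $-x^{2(n-3)}\Phi_{n,r-3}$.

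Adding the two cases gives the claimed identity. The hypothesis $r\ge4$ ensures that $\Gamma_{n,r-3}$ is a genuine digraph of the family, with the same gluing pattern near the boundary.

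For fact (a), I would argue by induction on $r$, viewing step $r$ as adding an ear. The ear is a directed path $P$ of length $n-1$ from an old vertex $a$ to an old vertex $b$. From the definition of $F_{r-2}$, $a=(r-2)n-2(r-2)+2$ and $b=(r-2)n+n-2(r-2)$, and the closing edge $(b,a)$ is old. For $r=2$ one reads off $a=1$ and $b=n$ directly.

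A new directed cycle must use $P$ and then return from $b$ to $a$ inside $\Gamma_{n,r-1}$. I would show that the only directed $b\to a$ path in $\Gamma_{n,r-1}$ is the single edge $(b,a)$, using two ingredients.
\begin{itemize}
\item By Lemma~\ref{lem:layer-compat}, any $b\to a$ path has length $\equiv1\pmod n$.
\item By the alternating orientation, within $\Gamma_{n,r-1}$ the only edge leaving $b$ is $(b,a)$, or more generally the outgoing edges at these boundary vertices force a path onto $C_{r-1}$.
\end{itemize}
I would check the outgoing-edge claim locally from the explicit edge lists $E(\Gamma_{n,2})$ and $F_i$.

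For fact (b), I would compute the vertex sets of the $C_k$ from the index formulas. Each $C_k$ with $k\ge2$ consists of its $n-2$ fresh vertices together with $a$ and $b$. Here $a$ is a fresh vertex of block $k-2$ and $b$ is a fresh vertex of block $k-1$, with suitable conventions for $k\le3$. This gives the adjacency pattern $|k-j|\le2$.

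I expect the main obstacle to be this index bookkeeping: confirming from the formulas for $F_i$ exactly which earlier blocks the two attachment vertices $in-2i+2$ and $in+n-2i$ belong to. The same bookkeeping underlies the uniqueness of the return path in (a). The cases $r=2,3$ and the special case $n=3$, where the middle path is empty, need a separate direct check. If the combinatorial route becomes messy, I would instead carry out the Schur complement explicitly. I would order the $n-2$ newest vertices last, so that $xI-A$ restricted to them is unipotent with determinant $x^{n-2}$. The Schur complement then modifies only the single entry $(a,b)$ by $-x^{-(n-2)}$. Expanding along that entry gives $\Phi_{n,r-1}$ plus a cofactor term, and that cofactor term would be evaluated by the same disjointness argument.
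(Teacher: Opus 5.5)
Your proposal is correct, and it reaches the recursion by a different route from the paper's.

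\textbf{The paper's route.} The paper argues locally and linear-algebraically. It eliminates the $n-2$ newest vertices by a Schur complement with $\det T_n=x^{n-2}$. It then notes that $UT_n^{-1}V$ is a single entry $x^{-(n-2)}$ at the position of the two attachment vertices, and expands along that entry. Finally it evaluates the resulting cofactor as $x^{2(n-3)}\Phi_{n,r-3}$ by a block-triangular argument. This only inspects the last gluing steps and never needs a classification of all directed cycles of $\Gamma_{n,r}$.

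\textbf{Your route.} You use the expansion over families of vertex-disjoint directed cycles, which requires the global facts (a) and (b) up front. Both are true, and your verification plan works.
\begin{itemize}
\item In $\Gamma_{n,r-1}$ the vertex $b$ is the terminal vertex of the newest ear (or $b=n$ when $r=2$), so its only out-edge is $(b,a)$. This alone gives (a) by induction; the layer congruence $\equiv1\pmod n$ is not needed.
\item For $k\ge3$, write $C_k$ as the $n-2$ fresh vertices of block $k$ together with the last fresh vertices of blocks $k-2$ and $k-1$; write $C_2$ as its fresh vertices together with $1$ and $n$. This gives (b).
\end{itemize}

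\textbf{What your route buys.} It proves more than the recursion. It shows $\Phi_{n,r}(x)=x^{(n-2)r+2}\sum_{S}(-x^{-n})^{|S|}$, where $S$ runs over subsets of $\{1,\dots,r\}$ whose elements pairwise differ by at least $3$. This is an independence polynomial of the square of a path. Since such $j$-subsets number $\binom{r+2-2j}{j}$, Lemma~\ref{lem:Phi-binomial} follows directly, without a separate induction. It also gives the combinatorial meaning of the paper's cofactor step: the only $b\to a$ path in $\Gamma_{n,r-1}$ is the edge $(b,a)$, and the cycles avoiding both $a$ and $b$ are exactly $C_1,\dots,C_{r-3}$.
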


\begin{proof}
Write \(xI-A_{n,r}\) in block form by separating the \(n-2\) vertices added in the last gluing step from the previously constructed part:
\[
xI-A_{n,r}=
\begin{bmatrix}
M_{n,r-1} & U\\
V & T_n
\end{bmatrix},
\]
where \(M_{n,r-1}=xI-A_{n,r-1}\), the block \(T_n\) corresponds to the new \((n-2)\)-vertex path, and \(U,V\) encode the two attachment edges.
Since \(\det(T_n)=x^{n-2}\), Schur complementation gives
\[
\Phi_{n,r}=x^{n-2}\det(M_{n,r-1}-UT_n^{-1}V).
\]
The matrix \(UT_n^{-1}V\) has a single nonzero entry, located at the position corresponding to the two attachment vertices, and this entry equals \((T_n^{-1})_{1,n-2}=x^{-(n-2)}\).
Hence \(M_{n,r-1}-UT_n^{-1}V\) is obtained from \(M_{n,r-1}\) by modifying a single entry, so
\[
\det(M_{n,r-1}-UT_n^{-1}V)=\det(M_{n,r-1})-x^{-(n-2)}\Delta,
\]
where \(\Delta\) is the corresponding cofactor.
Removing the row and column defining \(\Delta\) separates the terminal part of the staircase from the previously constructed subgraph \(\Gamma_{n,r-3}\).
With the remaining vertices ordered so that those of \(\Gamma_{n,r-3}\) come first, the cofactor matrix becomes block upper triangular.
Its leading diagonal block is exactly \(M_{n,r-3}\), and the remaining two diagonal blocks correspond to the two directed paths created by the last two gluing steps.
Each of these path blocks is upper triangular of size \(n-3\), with diagonal entries all equal to \(x\), and therefore has determinant \(x^{n-3}\).
Hence
\[
\Delta=x^{2(n-3)}\det(M_{n,r-3}).
\]
Substituting this identity into the Schur complement expansion above yields the claimed recurrence.
\end{proof}

\subsection{Zero multiplicity and the nonzero spectral count}\label{subsec:zero-and-nonzero}

Once the recurrence is in hand, the power of $x$ dividing $\Phi_{n,r}$ becomes an explicit periodic function of $r$.

\begin{proposition}[Zero multiplicity]\label{prop:zero-mult}
The algebraic multiplicity of the eigenvalue $0$ of $A_{n,r}$ equals

\[
\ord_x\Phi_{n,r}=rn-2r+2-n\Bigl\lfloor\frac{r+2}{3}\Bigr\rfloor
=\begin{cases}
\dfrac{1}{3}(2n-6)(r-1), & r\equiv 1\pmod 3,\\[3mm]
\dfrac{1}{3}(2n-6)(r-2)+(n-2), & r\equiv 2\pmod 3,\\[3mm]
\dfrac{1}{3}(2n-6)(r-3)+2(n-2), & r\equiv 0\pmod 3.
\end{cases}
\]
\end{proposition}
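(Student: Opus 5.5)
Write $a_r:=\ord_x\Phi_{n,r}$ and $d_r:=rn-2r+2=\deg\Phi_{n,r}$. The plan is to read off $a_r$ from Lemma~\ref{lem:Phi-rec} by tracking lowest-order terms. A clean way to do this is to pass to the reversed polynomials $\Psi_r(y):=y^{d_r}\Phi_{n,r}(1/y)$, whose degree equals the number of nonzero eigenvalues, $d_r-a_r$.

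Since $d_r-d_{r-1}=n-2$ and $d_r-d_{r-3}=3(n-2)$, multiplying the recursion by $y^{d_r}$ gives
\[
\Psi_r(y)=\Psi_{r-1}(y)-y^{3(n-2)-2(n-3)}\Psi_{r-3}(y)=\Psi_{r-1}(y)-y^{n}\Psi_{r-3}(y)\qquad(r\ge4).
\]
Each $\Psi_r$ has constant term $1$, because $\Phi_{n,r}$ is monic.

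The claim to establish is $\deg\Psi_r=n\lfloor (r+2)/3\rfloor=:e_r$, with leading coefficient $\pm1$. This is exactly the formula in the statement, since $a_r=d_r-\deg\Psi_r$.

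For the base cases $r=1,2,3$, compute directly:
\begin{itemize}
\item $\Phi_{n,1}=x^n-1$, which gives $\Psi_1=1-y^n$.
\item For $r=2,3$, either compute $\Phi_{n,2},\Phi_{n,3}$ directly, or use Proposition~\ref{prop:charpoly-reduction} and count closed walks / the cycle structure. $\Gamma_{n,2}$ has two $n$-cycles sharing an edge and no disjoint cycles, so $\Phi_{n,2}=x^{2n-2}-2x^{n-2}$. This gives $\Psi_2=1-2y^n$, of degree $n=e_2$.
\item For $\Gamma_{n,3}$, the first and third cycles are vertex-disjoint if the gluing edges differ, and the staircase pattern suggests so. By the Coates/Sachs expansion one expects $\Psi_3=1-3y^n+y^{2n}$, of degree $2n=e_3$.
\end{itemize}

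For the inductive step $r\ge4$, the degrees satisfy $e_{r-1}$ versus $n+e_{r-3}=e_r$. If $r\equiv1\pmod 3$, then $e_r=e_{r-1}+n$. Otherwise $e_r=e_{r-1}$, and the two terms have the same degree, so cancellation must be ruled out.

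This is the main obstacle: strict induction on degrees alone does not suffice. I would handle it by tracking leading coefficients. Let $c_r$ be the leading coefficient of $\Psi_r$, viewed in degree $e_r$. Then
\[
c_r=[e_{r-1}=e_r]\,c_{r-1}-c_{r-3}.
\]
From the base values $c_1=-1$, $c_2=-2$, $c_3=1$, this gives
\[
c_4=1,\quad c_5=1+2=3,\quad c_6=3-1=2,\quad c_7=-2,\dots
\]
These are visibly nonzero, but proving nonvanishing in general requires identifying the $c_r$. Within each residue block the recursion is $c_{3k+1}=-c_{3k-2}$, $c_{3k+2}=c_{3k+1}-c_{3k-1}$, $c_{3k+3}=c_{3k+2}-c_{3k}$. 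I would solve this small linear system explicitly. Up to sign the values look like $1,2,1,1,3,2,\dots$, namely binomial coefficients $\binom{k+1}{1}$-type, i.e.\ $|c|$ equals $1$, $k+1$, or $k$. This can be verified by a direct induction on $k$, which shows $c_r\ne0$ for all $r$.

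Alternatively, to avoid computing coefficients, I would invoke Theorem~\ref{thm:K-pos-simple}. All roots of $\det(x^nI-K)$ in $x^n$ are positive and simple, so $\Psi_r(y)=\prod(1-\lambda_i y^n)$ over the positive eigenvalues $\lambda_i$ of $K_{n,r}$. Its top coefficient $\pm\prod\lambda_i$ is then nonzero automatically, and the degree equals $n$ times the number of those eigenvalues.

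Combining, $\deg\Psi_r=n\lfloor(r+2)/3\rfloor$ and hence $\ord_x\Phi_{n,r}=d_r-n\lfloor(r+2)/3\rfloor$. The three displayed cases then follow by writing $r=3k+1,3k+2,3k+3$ and simplifying $\lfloor (r+2)/3\rfloor=k+1$.
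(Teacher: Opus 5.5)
Your reversal $\Psi_r(y)=y^{d_r}\Phi_{n,r}(1/y)$, the recursion $\Psi_r=\Psi_{r-1}-y^n\Psi_{r-3}$, and the plan of tracking the coefficient $c_r$ of $y^{e_r}$ are all correct. They are the paper's argument read through $y=1/x$: your $c_r$ is exactly the paper's coefficient of $x^{m_{n,r}}$ in $\Phi_{n,r}$.

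The proof fails, however, at the base case $r=3$. First, $e_3=n\lfloor 5/3\rfloor=n$, not $2n$. A $\Psi_3$ of degree $2n$ would give $\ord_x\Phi_{n,3}=n-4$, which contradicts the statement you are proving. Second, the disjointness you assume is false. The three cycles of $\Gamma_{n,3}$ are $1\to\cdots\to n\to1$, $n\to1\to n+1\to\cdots\to2n-2\to n$, and $2n-2\to n\to 2n-1\to\cdots\to 3n-4\to 2n-2$. All three pass through the vertex $n$, and deleting $n$ leaves an acyclic digraph. Hence there are no disjoint pairs, and $\Phi_{n,3}=x^{3n-4}-3x^{2n-4}$, which Lemma~\ref{lem:Phi-binomial} also gives. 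So $\Psi_3=1-3y^n$ and $c_3=-3$, not $1$. (The paper's proof also lists $c_{n,3}=1$. That appears to be a slip, since its opposite-sign argument only works with $c_{n,3}<0$.) Your stated target ``leading coefficient $\pm1$'' is also already false at $r=2$, where $c_2=-2$.

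The second gap is that you never actually prove $c_r\neq0$. Once $\deg\Psi_r\le e_r$ is known, this nonvanishing is the entire content of the proposition. The pattern $|c_r|\in\{1,k+1,k\}$ you announce is not what the recursion produces; even with your own data, $c_7=-c_4=-1$, not $-2$. Saying ``this can be verified by a direct induction'' is not an argument.

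Your fallback via Theorem~\ref{thm:K-pos-simple} is circular. Writing $\Psi_r=\prod_i(1-\lambda_i y^n)$ only shows that the top coefficient of $\Psi_r$ is nonzero, which is true of every nonzero polynomial. It does not show that the top degree is $e_r$. In other words, it does not tell you how many nonzero eigenvalues $K_{n,r}$ has, and that is precisely what has to be computed.

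To close the gap, start from $c_1=-1$, $c_2=-2$, $c_3=-3$. Then prove by induction on $k$ that $c_{3k+1},c_{3k+2},c_{3k+3}$ all have sign $(-1)^{k+1}$:
\begin{itemize}
\item the step $c_{3k+1}=-c_{3k-2}$ flips the sign;
\item $c_{3k+2}=c_{3k+1}-c_{3k-1}$ and $c_{3k+3}=c_{3k+2}-c_{3k}$ are differences of terms of opposite signs, hence nonzero.
\end{itemize}
In fact $c_r=(-1)^{k+1}\binom{r-2k}{k+1}$ for $r\in\{3k+1,3k+2,3k+3\}$, so $|c_r|$ equals $1$, $k+2$, $\binom{k+3}{2}$ respectively.
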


\begin{proof}
Let
\[
m_{n,r}:=rn-2r+2-n\Bigl\lfloor\frac{r+2}{3}\Bigr\rfloor .
\]
We first prove that \(x^{m_{n,r}}\) divides \(\Phi_{n,r}\), and then show that the coefficient of \(x^{m_{n,r}}\) does not vanish.

A direct check shows that
\[
m_{n,r}=
\begin{cases}
2(n-3)+m_{n,r-3}, & r\equiv1\pmod3,\\[2mm]
(n-2)+m_{n,r-1}=2(n-3)+m_{n,r-3}, & r\equiv0,2\pmod3.
\end{cases}
\]
Hence Lemma~\ref{lem:Phi-rec} implies inductively that \(m_{n,r}\le \ord_x\Phi_{n,r}\) for all \(r\ge1\).

Let \(c_{n,r}\) denote the coefficient of \(x^{m_{n,r}}\) in \(\Phi_{n,r}\). Then
\[
c_{n,r}=
\begin{cases}
-\,c_{n,r-3}, & r\equiv1\pmod3,\\[2mm]
c_{n,r-1}-c_{n,r-3}, & r\equiv0,2\pmod3.
\end{cases}
\]
Since
\[
c_{n,1}=-1,\qquad c_{n,2}=-2,\qquad c_{n,3}=1,
\]
the first relation fixes the signs of \(c_{n,3q+1}\).
It then follows inductively from the second relation that \(c_{n,3q+2}=c_{n,3q+1}-c_{n,3q-1}\) and \(c_{n,3q+3}=c_{n,3q+2}-c_{n,3q}\) are differences of opposite-sign terms, and hence are nonzero.
Therefore \(c_{n,r}\neq0\) for every \(r\ge1\), so \(\ord_x\Phi_{n,r}=m_{n,r}\).

The final piecewise form is an immediate consequence of the floor expression by considering the residue class of \(r\) modulo \(3\).
\end{proof}

The nonzero spectral count is now immediate from $\deg\Phi_{n,r}=rn-2r+2$ and Proposition~\ref{prop:zero-mult}.

\begin{corollary}[Nonzero spectral count]\label{cor:nonzero-count}
The number of nonzero eigenvalues of $A_{n,r}$ is
\[
\deg\Phi_{n,r}-\ord_x\Phi_{n,r}
=
n\Bigl\lfloor\frac{r+2}{3}\Bigr\rfloor.
\]
\end{corollary}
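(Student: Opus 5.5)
The count follows by subtraction. The degree is known: $\Phi_{n,r}=\det(xI-A_{n,r})$ is monic of degree $|V(\Gamma_{n,r})|=rn-2r+2$. Since $\Phi_{n,r}$ splits over $\mathbb C$, its roots counted with algebraic multiplicity are exactly the eigenvalues of $A_{n,r}$. So the number of nonzero eigenvalues, counted with multiplicity, is the degree minus the multiplicity of the root $0$, namely $\deg\Phi_{n,r}-\ord_x\Phi_{n,r}$.

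Next we substitute Proposition~\ref{prop:zero-mult}, which gives $\ord_x\Phi_{n,r}=rn-2r+2-n\lfloor (r+2)/3\rfloor$. Subtracting this from $rn-2r+2$ leaves $n\lfloor (r+2)/3\rfloor$, which is the claimed value.

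By Corollary~\ref{cor:simple-nonzero}, every nonzero eigenvalue is simple. Hence "counted with multiplicity" and "distinct" agree here, and the corollary may be read either way.

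As a consistency check, Proposition~\ref{prop:charpoly-reduction} shows that the nonzero eigenvalues come in $\exp(2\pi i/n)$-orbits of size $n$. The count must therefore be a multiple of $n$, and $n\lfloor (r+2)/3\rfloor$ is. It also follows that $\lfloor (r+2)/3\rfloor$ equals the number of nonzero, and by Theorem~\ref{thm:K-pos-simple} positive and simple, eigenvalues of $K_{n,r}$.

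There is no real obstacle. All the substance sits in Proposition~\ref{prop:zero-mult}, and what remains is the subtraction $(rn-2r+2)-\bigl(rn-2r+2-n\lfloor (r+2)/3\rfloor\bigr)$.
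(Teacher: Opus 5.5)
Your proposal is correct and is the paper's argument: the count is $\deg\Phi_{n,r}=rn-2r+2$ minus the zero multiplicity from Proposition~\ref{prop:zero-mult}, which leaves $n\lfloor (r+2)/3\rfloor$. Your extra remarks are valid consistency checks but are not needed for the proof: simplicity makes the count the same with or without multiplicity, and Proposition~\ref{prop:charpoly-reduction} makes the total $n$ times the number of nonzero eigenvalues of $K_{n,r}$.
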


The recursion in Lemma~\ref{lem:Phi-rec} will be used in two ways: it fixes the spectral count in this section,
and it also yields a cubic-denominator generating function.
In the next section we apply a Tran-type confinement theorem to this generating function to obtain a uniform spectral bound.

\section{Tran-type confinement and the spectral radius}\label{sec:Tran}

In this section we convert the $r$-recursion from Section~\ref{sec:recursion-and-counts} into a generating-function identity and invoke a confinement theorem of Tran.
This yields a uniform disk bound for the spectrum and, combined with simplicity, explains the polygonal geometry in the complex plane.

\subsection{A Tran-type root confinement}\label{subsec:Tran-confinement}
The cubic denominator of the generating function places us exactly in the setting of Tran's theorem.

\begin{lemma}\label{lem:Phi-H}
Define polynomials $H_m(z)\in\mathbb{Z}[z]$ by
\[
\sum_{m\ge0}H_m(z)t^m=\frac{1}{1-t+zt^3}.
\]
Then, for all $r\ge1$,
\[
\Phi_{n,r}(x)=x^{(n-2)r+2}\,H_{r+2}(x^{-n}).
\]
\end{lemma}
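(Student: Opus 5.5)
Both sides satisfy the same three-term recursion in $r$, so the plan is induction on $r$: check that the right-hand side obeys Lemma~\ref{lem:Phi-rec}, then match the three initial values $r=1,2,3$.

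\emph{Recursion for the right-hand side.} From $(1-t+zt^3)\sum_m H_m t^m=1$ we read off $H_0=1$, $H_1=1$, $H_2=1$ and
\[
H_m(z)=H_{m-1}(z)-zH_{m-3}(z)\qquad(m\ge3).
\]
Set $\Psi_r(x):=x^{(n-2)r+2}H_{r+2}(x^{-n})$ and apply this with $m=r+2\ge 3$. Using the exponent identities
\[
(n-2)r+2=(n-2)+\bigl((n-2)(r-1)+2\bigr),
\]
\[
(n-2)r+2-n=2(n-3)+\bigl((n-2)(r-3)+2\bigr),
\]
we get
\[
\Psi_r=x^{n-2}\Psi_{r-1}-x^{2(n-3)}\Psi_{r-3}.
\]
This is valid whenever $r-3\ge1$, i.e.\ $r\ge4$, which is exactly the range of Lemma~\ref{lem:Phi-rec}. (It also holds for $r\ge1$ if $\Psi_0,\Psi_{-1},\Psi_{-2}$ are defined by the same formula; $\Psi_{-2}=x^{2n-2}H_0$ is a harmless Laurent-type initial term.)

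\emph{Initial values.} Next I compute $H_3=1-z$, $H_4=1-2z$ and $H_5=1-3z$, so that
\[
\Psi_1=x^{n}-1,\qquad \Psi_2=x^{2n-2}-2x^{n-2},\qquad \Psi_3=x^{3n-4}-3x^{2n-4}.
\]
I then verify these equal $\Phi_{n,1},\Phi_{n,2},\Phi_{n,3}$ directly.
\begin{itemize}
\item $\Gamma_{n,1}$ is a directed $n$-cycle, so $\Phi_{n,1}=x^n-1$.
\item For $\Gamma_{n,2}$, use the same Schur complement as in Lemma~\ref{lem:Phi-rec}. Removing the new path of $n-2$ vertices adds the entry $x^{-(n-2)}$ at position $(n,1)$ of $xI-A_{n,1}$, where the $(n,1)$ entry is $-1$ because of the edge $(n,1)$. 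This amounts to one extra closed walk $1\to n+1\to\cdots\to 2n-2\to n$ of length $n-1$ from vertex $1$ back to $1$. Equivalently, the cycle-expansion (Sachs-type) formula for digraphs applies: $\det(xI-A)=\sum_{\mathcal L}(-1)^{c(\mathcal L)}x^{N-|\mathcal L|}$, summed over collections $\mathcal L$ of vertex-disjoint directed cycles, with $c(\mathcal L)$ the number of cycles. The only directed cycles are the two $n$-cycles, and they share vertices $1$ and $n$, so $\Phi_{n,2}=x^{2n-2}-2x^{n-2}$, matching $\Psi_2$.
\item For $\Gamma_{n,3}$, there are exactly three $n$-cycles and each pair shares a vertex. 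This is where I must check that the cycle $C_3$, glued via $F_1$, meets $C_1$: it should share the glued edge's vertex $n$ (since $F_1$ enters at $n$ and exits to $n+1$). The same formula then gives $x^{3n-4}-3x^{2n-4}$.
\end{itemize}

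\emph{Expected obstacle.} The main difficulty is the cycle bookkeeping in the base cases: confirming from the edge sets $E(\Gamma_{n,2})$ and $F_1$ that the only directed cycles are the $r$ glued $n$-cycles, and exactly which pairs are vertex-disjoint. This is also implicitly what underlies Lemma~\ref{lem:Phi-rec}, namely that $C_k$ and $C_{k+3}$ are the first disjoint pairs, so I would verify it once carefully using the layer structure of Lemma~\ref{lem:layer-compat}. With the three initial values matched and the common recursion established, induction on $r$ gives $\Phi_{n,r}=\Psi_r$ for all $r\ge1$.
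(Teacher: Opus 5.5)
Your proposal is correct and follows the paper's proof: you use $H_m=H_{m-1}-zH_{m-3}$ to show that $x^{(n-2)r+2}H_{r+2}(x^{-n})$ obeys the recursion of Lemma~\ref{lem:Phi-rec} for $r\ge4$, then match $r=1,2,3$, which the paper leaves to ``direct computation'' and you carry out correctly with the Sachs-type cycle expansion (the cycle bookkeeping you flagged is settled by noting that every directed cycle of $\Gamma_{n,2}$, resp.\ $\Gamma_{n,3}$, passes through vertex $1$, resp.\ $n$, since deleting that vertex leaves an acyclic digraph). The few slips in your asides do not affect the argument: the Schur-complement correction sits at position $(1,n)$ (an effective edge $1\to n$ of weight $x^{-(n-2)}$ closing a $2$-cycle with $(n,1)$), $F_1$ exits via $(3n-4,2n-2)$ rather than into $n+1$, and $\Psi_{-2}=x^{6-2n}$.
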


\begin{proof}
From the generating function we have $H_0=H_1=H_2=1$ and the recursion
$H_m=H_{m-1}-zH_{m-3}$ for $m\ge3$.
Writing $z=x^{-n}$ and multiplying by $x^{(n-2)r+2}$ gives, for $r\ge4$,
\[
x^{(n-2)r+2}H_{r+2}(x^{-n})
= x^{n-2}\,x^{(n-2)(r-1)+2}H_{r+1}(x^{-n})
 - x^{2(n-3)}\,x^{(n-2)(r-3)+2}H_{r-1}(x^{-n}).
\]
This matches the recursion in Lemma~\ref{lem:Phi-rec}.
Since the displayed identity agrees with $\Phi_{n,r}$ for $r=1,2,3$ by direct computation,
the result follows for all $r\ge1$ by induction.
\end{proof}

\begin{lemma}\label{lem:no-boundary-root}
For every $m\ge0$, one has
\[
H_m\!\left(\frac{4}{27}\right)>0.
\]
In particular, no nonzero root $\lambda$ of $\Phi_{n,r}$ satisfies $\lambda^n=27/4$.
\end{lemma}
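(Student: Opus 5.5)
At $z=4/27$ the characteristic polynomial $1-t+zt^3$ has a double root, and the plan is to exploit this to get an explicit closed form for $H_m(4/27)$. The recursion from the proof of Lemma~\ref{lem:Phi-H} is $H_m=H_{m-1}-zH_{m-3}$ with $H_0=H_1=H_2=1$. The associated characteristic equation in $\lambda$ is $\lambda^3-\lambda^2+z=0$.

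At $z=4/27$ this factors as
\[
\lambda^3-\lambda^2+\tfrac{4}{27}=\Bigl(\lambda-\tfrac23\Bigr)^2\Bigl(\lambda+\tfrac13\Bigr).
\]
To confirm, expand: $(\lambda^2-\tfrac43\lambda+\tfrac49)(\lambda+\tfrac13)=\lambda^3-\lambda^2+0\cdot\lambda+\tfrac{4}{27}$. Hence
\[
H_m(4/27)=(a+bm)\Bigl(\tfrac23\Bigr)^m+c\Bigl(-\tfrac13\Bigr)^m .
\]

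Next I fit $a,b,c$ to the initial values $1,1,1$. The three conditions are $a+c=1$, $\tfrac23(a+b)-\tfrac13 c=1$ and $\tfrac49(a+2b)+\tfrac19 c=1$. Substituting $c=1-a$ into the second gives $3a+2b=4$, and into the third gives $3a+8b=8$. Solving yields $b=2/3$, $a=8/9$, $c=1/9$, so
\[
H_m(4/27)=\frac{(8+6m)2^m+(-1)^m}{9\cdot 3^m}.
\]
For $m=0$ this gives $9/9=1$, which is a sanity check. For every $m\ge0$ we have $(8+6m)2^m\ge 8>1$, so the numerator is positive and $H_m(4/27)>0$. I would verify the closed form cleanly by checking that it satisfies the recursion, which holds by construction, and matches $m=0,1,2$.

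For the consequence, suppose $\lambda\neq0$ is a root of $\Phi_{n,r}$ with $\lambda^n=27/4$. By Lemma~\ref{lem:Phi-H},
\[
0=\Phi_{n,r}(\lambda)=\lambda^{(n-2)r+2}H_{r+2}(\lambda^{-n})=\lambda^{(n-2)r+2}H_{r+2}(4/27).
\]
Since $\lambda\neq0$, this forces $H_{r+2}(4/27)=0$, contradicting positivity.

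The only real obstacle is recognizing the double root at $z=4/27$, which is the discriminant locus of the cubic and matches the $27/4$ threshold. After that, the computation is routine.
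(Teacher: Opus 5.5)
Your proof is correct and essentially the same as the paper's. The paper factors the generating-function denominator as $(1-\tfrac23 t)^2(1+\tfrac13 t)$ and uses partial fractions, which is equivalent to your repeated-root ansatz for the recurrence; it reaches the same closed form $H_m(4/27)=\frac{(-1)^m+(6m+8)2^m}{9\cdot 3^m}$ and derives the consequence from Lemma~\ref{lem:Phi-H} exactly as you do.
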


\begin{proof}
At $z=4/27$,
\[
\sum_{m\ge0}H_m\!\left(\frac{4}{27}\right)t^m
=
\frac{1}{1-t+\frac{4}{27}t^3}
=
\frac{1}{\left(1-\frac23 t\right)^2\left(1+\frac13 t\right)}.
\]
A partial-fraction decomposition gives
\[
\frac{1}{\left(1-\frac23 t\right)^2\left(1+\frac13 t\right)}
=
\frac{1}{9}\frac{1}{1+\frac13 t}
+\frac{2}{9}\frac{1}{1-\frac23 t}
+\frac{2}{3}\frac{1}{\left(1-\frac23 t\right)^2}.
\]
Expanding each term as a power series,
\[
\frac{1}{1+\frac13 t}=\sum_{m\ge0}\left(-\frac13\right)^m t^m,\qquad
\frac{1}{1-\frac23 t}=\sum_{m\ge0}\left(\frac23\right)^m t^m,\qquad
\frac{1}{\left(1-\frac23 t\right)^2}=\sum_{m\ge0}(m+1)\left(\frac23\right)^m t^m.
\]
Comparing coefficients of $t^m$ gives
\[
H_m\!\left(\frac{4}{27}\right)
=
\frac19\left(-\frac13\right)^m
+\frac29\left(\frac23\right)^m
+\frac23(m+1)\left(\frac23\right)^m
=
\frac{(-1)^m+(6m+8)2^m}{9\cdot 3^m}>0.
\]
Now let $\lambda\neq0$ be a root of $\Phi_{n,r}$.
If $\lambda^n=27/4$, then Lemma~\ref{lem:Phi-H} gives
\[
0=\Phi_{n,r}(\lambda)=\lambda^{(n-2)r+2}H_{r+2}(\lambda^{-n})
=\lambda^{(n-2)r+2}H_{r+2}\!\left(\frac{4}{27}\right),
\]
which is impossible because $\lambda\neq0$ and $H_{r+2}(4/27)>0$.
\end{proof}

\begin{theorem}[Tran-type confinement]\label{thm:Tran}
Fix $n\ge 3$ and $r\ge 1$. Every nonzero root $\lambda$ of $\Phi_{n,r}$ satisfies
\[
\mathrm{Im}(\lambda^n)=0,
\qquad
0<\mathrm{Re}(\lambda^n)< \frac{27}{4}.
\]
\end{theorem}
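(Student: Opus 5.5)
The plan is to establish the two conditions separately: the reality and positivity of $\lambda^n$ come from the core $K_{n,r}$, and the strict upper bound comes from Tran's theorem applied to the generating function of Lemma~\ref{lem:Phi-H}.

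\emph{Reality and positivity.} By Proposition~\ref{prop:charpoly-reduction}, a nonzero root $\lambda$ of $\Phi_{n,r}$ has $\lambda^n$ equal to a nonzero eigenvalue of $K_{n,r}$. By Theorem~\ref{thm:K-pos-simple}, every such eigenvalue is real and positive. Hence $\mathrm{Im}(\lambda^n)=0$ and $\mathrm{Re}(\lambda^n)=\lambda^n>0$. This step uses no new ingredients.

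\emph{Upper bound.} Set $z=\lambda^{-n}$. Since $\lambda\neq0$, Lemma~\ref{lem:Phi-H} shows that $H_{r+2}(z)=0$, so it suffices to locate the zeros of the $H_m$. The denominator $1-t+zt^3$ has the form $1+B(z)t+A(z)t^3$ with $B=-1$ and $A=z$. Tran's theorem \cite{Tran2014} for such cubic denominators places the zeros of $H_m$ on the real ray defined by
\[
\frac{A(z)}{B(z)^3}\in\Bigl(-\infty,\,-\tfrac{4}{27}\Bigr].
\]
Here $A/B^3=-z$, so the condition reads $z\in[4/27,\infty)$, which is the curve $\{z: z\ \text{real},\ z\ge 4/27\}$. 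Translating back gives $\lambda^n=1/z\in(0,27/4]$. Lemma~\ref{lem:no-boundary-root} excludes the endpoint $\lambda^n=27/4$, which yields the strict inequality $\mathrm{Re}(\lambda^n)<27/4$. As a side benefit, this argument also reproves $\mathrm{Im}(\lambda^n)=0$ independently of total nonnegativity.

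\emph{Main obstacle.} The delicate step is matching Tran's exact hypotheses and normalization: the sign convention for $B$, the precise threshold $4/27$ versus $27/4$ for the discriminant of $1+Bt+At^3$, and whether the theorem requires $m$ beyond some small index or excludes finitely many trivial cases. If there is doubt about the normalization, I would fall back on an elementary direct argument. For real $z>4/27$, the cubic $zt^3-t+1$ has one negative real root and two complex conjugate roots of equal modulus. Writing $H_m$ through the partial fractions of these roots then shows that the real eigenvalues $\lambda^n$ of $K_{n,r}$, already known to lie in $(0,\infty)$, cannot be reached when $z<4/27$. In that range all three roots of $1-t+zt^3$ are real, and $H_m(z)$ is a positive combination dominated by the smallest root, so $H_m(z)>0$. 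Concretely, I would show $H_m(z)>0$ for all $z\in(0,4/27]$, extending the computation in Lemma~\ref{lem:no-boundary-root} by a continuity/interlacing argument on the three real roots. Together with the positivity of $\lambda^n$, this gives $z=\lambda^{-n}>4/27$, that is, $\lambda^n<27/4$.
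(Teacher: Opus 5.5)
Your proof is correct and is essentially the paper's: Tran's theorem with $B=-1$, $A=z$ puts $\lambda^n=1/z$ in $(0,27/4]$, and Lemma~\ref{lem:no-boundary-root} removes the endpoint; the only difference is that you take reality and positivity of $\lambda^n$ from Theorem~\ref{thm:K-pos-simple}, whereas the paper reads them off Tran's curve together with $\lambda\neq0$. Your Tran-free fallback is unnecessary and slightly off as stated, since the partial-fraction term from the larger positive root of $1-t+zt^3$ has a negative coefficient, so $H_m(z)$ is not a positive combination; it is easily repaired by writing $z=\alpha^2(1-\alpha)$ with $\alpha\in(0,1)$, which gives $1-t+zt^3=(1-\alpha t)\bigl(1-(1-\alpha)t-\alpha(1-\alpha)t^2\bigr)$ and hence $H_m(z)\ge\alpha^m>0$ for all $z\in(0,4/27]$.
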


\begin{proof}
Let $\lambda\ne0$ be a root of $\Phi_{n,r}$.
By Lemma~\ref{lem:Phi-H} we have $H_{r+2}(\lambda^{-n})=0$.
Apply Tran's confinement theorem \cite[Theorem~3]{Tran2014} to
\[
\sum_{m\ge0}H_m(z)t^m=\frac{1}{1+Bt+At^3}
\quad\text{with}\quad B=-1,\ \ A=z.
\]
At $z=\lambda^{-n}$, Tran's parameter is
\[
-\frac{B^3}{A}=\frac{1}{z}=\lambda^n.
\]
Hence $\mathrm{Im}(\lambda^n)=0$ and $0\le \mathrm{Re}(\lambda^n)\le 27/4$.
Since $\lambda\ne0$, we have $\lambda^n\ne0$, so in fact $\mathrm{Re}(\lambda^n)>0$.
Moreover, Lemma~\ref{lem:no-boundary-root} rules out the boundary value $\lambda^n=27/4$.
Therefore
\[
0<\mathrm{Re}(\lambda^n)<\frac{27}{4}.
\]
\end{proof}

\subsection{A two-parameter limit for the Perron root}\label{subsec:Perron-limit}
We use the density statement in Tran's confinement theorem to upgrade the uniform bound in Theorem~\ref{thm:Tran} to a limit as $r\to\infty$.

\begin{proposition}[Sharpness of the Tran bound]\label{prop:sharp-Tran}
Fix $n\ge3$. Then
\[
\lim_{r\to\infty}\rho(A_{n,r})=\Bigl(\frac{27}{4}\Bigr)^{\frac{1}{n}}.
\]
\end{proposition}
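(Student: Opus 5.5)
The upper bound is already available: Theorem~\ref{thm:Tran} gives $\rho(A_{n,r})<(27/4)^{1/n}$ for every $r$, so $\limsup_{r\to\infty}\rho(A_{n,r})\le(27/4)^{1/n}$. It remains to prove the matching lower bound $\liminf_{r\to\infty}\rho(A_{n,r})\ge(27/4)^{1/n}$. By Lemma~\ref{lem:Phi-H}, the nonzero eigenvalues $\lambda$ of $A_{n,r}$ are exactly the $\lambda$ with $H_{r+2}(\lambda^{-n})=0$. So the task reduces to a statement about the roots of $H_m$ alone. The roots of $H_m$ other than $z=0$ should have smallest modulus tending to $4/27$ as $m\to\infty$. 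Equivalently, the largest value $w=1/z$ over such roots should tend to $27/4$.

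\emph{Step 1 (Tran density).} I would first use the density half of Tran's theorem \cite[Theorem~3]{Tran2014}. It says the set $\bigcup_m\{-B^3/A : H_m\text{ has a root}\}$ is dense in the interval $[0,27/4]$. If a uniform-in-$m$ density statement is available, it gives, for each $\varepsilon>0$, roots $w$ with $w>27/4-\varepsilon$ for every large $m$. However, density of a union is not monotone in $m$, so the case of a fixed large $m$ needs extra work.

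\emph{Step 2 (direct asymptotics, the main obstacle).} To get the bound for every large $m$, I would argue directly. Fix $w\in(0,27/4)$ and set $z=1/w>4/27$. Then $1-t+zt^3$ has one negative real root $t_0$ and a pair of complex conjugate roots $t_1,\overline{t_1}$. Check the ordering of moduli: when $z>4/27$, the conjugate pair has the smaller modulus, so it dominates.

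Partial fractions then give
\[
H_m(z)=2\,\mathrm{Re}\bigl(c\,t_1^{-m-1}\bigr)+O(|t_0|^{-m}).
\]
Hence $H_m(z)\approx 2|c|\,|t_1|^{-m-1}\cos\bigl((m+1)\theta(z)+\phi(z)\bigr)$, where $\theta(z)=-\arg t_1$. As $z\downarrow 4/27$, the pair $t_1,\overline{t_1}$ merges at the double root $3/2$, so $\theta(z)\to0$ continuously.

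Now take an interval $[z_1,z_2]\subset(4/27,4/27+\delta)$. On it, $(m+1)\theta$ sweeps through more than $2\pi$ once $m$ is large. The cosine factor then changes sign, and because the error term is exponentially smaller, $H_m$ also changes sign on $[z_1,z_2]$. By the intermediate value theorem, $H_m$ has a real root $z_m\in(4/27,4/27+\delta)$ for all large $m$. This is the step I expect to be hardest, since it requires controlling the error term uniformly in $z$.

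\emph{Step 3 (conclusion).} The root $z_m$ gives $\lambda=z_m^{-1/n}>0$ with $\Phi_{n,r}(\lambda)=0$, where $m=r+2$. Therefore
\[
\rho(A_{n,r})\ge\bigl(4/27+\delta\bigr)^{-1/n}\qquad\text{for all large } r.
\]
Letting $\delta\to0$ and combining with the upper bound from Theorem~\ref{thm:Tran} gives the limit.

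As a simpler alternative to Step 2, I could try monotonicity. The spectral radius is Perron, and $\Gamma_{n,r-1}$ is an induced subdigraph of $\Gamma_{n,r}$, so $\rho(A_{n,r})$ is nondecreasing in $r$. If Tran's density gives roots arbitrarily close to $27/4$ for infinitely many $r$, monotonicity upgrades this to the full limit and bypasses the uniform asymptotics. I would try this route first.
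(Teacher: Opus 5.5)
Your proposal is correct, and it contains two arguments. The fallback you say you would try first is exactly the paper's proof. Since $\Gamma_{n,r-1}$ is an induced subdigraph of $\Gamma_{n,r}$, the spectral radius $\rho(A_{n,r})$ is nondecreasing and bounded above via Theorem~\ref{thm:Tran}. The density clause of Tran's Theorem~3, transported through Lemma~\ref{lem:Phi-H}, then supplies positive eigenvalues $z_m^{-1/n}$ with $1/z_m$ arbitrarily close to $27/4$. The condition you attach to this route is weaker than you fear. By monotonicity the limit equals $\sup_r\rho(A_{n,r})$, so one $r$ for each $\varepsilon$ suffices, and density of the union over $m$ gives exactly that. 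Infinitely many $r$ follow anyway: each $H_m$ has finitely many roots, and by Lemma~\ref{lem:no-boundary-root} none of them has $1/z=27/4$.

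Your main line, Step~2, is a genuinely different route. It is a dominant-root asymptotic for $H_m(z)$ on $z>4/27$, showing that for \emph{every} large $m$ there is a real root in $(4/27,4/27+\delta)$. It uses only Lemma~\ref{lem:Phi-H} and the upper bound, and needs neither monotonicity nor the external density statement. It also gives the quantitative conclusion $\rho(A_{n,r})\ge(4/27+\delta)^{-1/n}$ for all large $r$ directly. The paper's route is shorter, but it depends on Tran's density clause (a statement about the union over $m$) together with monotonicity.

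The uniformity you single out as the main obstacle is routine on a fixed compact interval $[z_1,z_2]\subset(4/27,\infty)$:
\begin{itemize}
\item The discriminant $z(4-27z)$ is negative there, so the three roots are simple and depend continuously on $z$.
\item Writing $zt^3-t+1=z(t+s)(t^2-st+q)$ with $t_0=-s$ gives $q=|t_1|^2=s^2-1/z<s^2$. Hence $|t_1|/|t_0|\le\kappa<1$ uniformly on the interval.
\item The residue $c(z)$ is continuous and nonvanishing, so the oscillating main term dominates the error uniformly.
\end{itemize}
The one point you should make explicit is that $\theta$ is nonconstant on $[z_1,z_2]$. Normalize $\mathrm{Im}\,t_1<0$. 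From $zs^3=s+1$ one finds $\tan^2\theta=(3-s)/(s+1)$, and $s$ is strictly decreasing in $z$. So $\theta>0$ is strictly increasing, and $(m+1)(\theta(z_2)-\theta(z_1))\to\infty$, as your sign-change argument requires.
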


\begin{proof}
Since $\Gamma_{n,r-1}$ is an induced subgraph of $\Gamma_{n,r}$, the matrix $A_{n,r-1}$ is a principal submatrix of $A_{n,r}$.
For nonnegative matrices, the spectral radius is monotone under taking principal submatrices; see, e.g.,
\cite[Corollary~8.1.20(a)]{HornJohnson2013}. Hence $\rho(A_{n,r})$ is nondecreasing in $r$.
By Theorem~\ref{thm:Tran}, $\rho(A_{n,r})\le (27/4)^{1/n}$ for all $r$, so $\displaystyle\lim_{r\to\infty}\rho(A_{n,r})$ exists.

It remains to show $\displaystyle\limsup_{r\to\infty}\rho(A_{n,r})\ge (27/4)^{1/n}$.
In the specialization $B=-1$, $A=z$ from the proof of Theorem~\ref{thm:Tran},
Tran's theorem \cite[Theorem~3]{Tran2014} implies that the set of parameters $1/z$ arising from roots $z$ of $H_m$ with
$\mathrm{Im}(1/z)=0$ is dense in $(0,27/4]$ as $m\to\infty$.

Let $\varepsilon>0$.
Choose arbitrarily large $m$ and a root $z_m$ of $H_m$ with $\mathrm{Im}(1/z_m)=0$ and $1/z_m>27/4-\varepsilon$.
Write $m=r+2$ and choose the positive real number $\lambda$ such that $\lambda^n=1/z_m$.
By Lemma~\ref{lem:Phi-H}, this $\lambda$ is a nonzero root of $\Phi_{n,r}$, hence an eigenvalue of $A_{n,r}$.
Therefore
\[
\rho(A_{n,r})\ge \lambda=\left(\frac{1}{z_m}\right)^{\frac{1}{n}}>\left(\frac{27}{4}-\varepsilon\right)^{\frac{1}{n}}.
\]
Since this holds for arbitrarily large \(r\), and since \(\varepsilon>0\) was arbitrary, we obtain
\[
\limsup_{r\to\infty}\rho(A_{n,r})\ge \left(\frac{27}{4}\right)^{\frac{1}{n}}.
\]
Since we also have the uniform upper bound $\rho(A_{n,r})\le (27/4)^{1/n}$, the proof is complete.
\end{proof}

\subsection{Regular $n$-gons in the complex plane}\label{subsec:ngons}

Now the algebra turns into geometry: every packet becomes a rigid regular $n$-gon.

\begin{theorem}[Polygonal spectrum]\label{thm:complex-spectrum}
Let $n\ge 3$ and $r\ge 1$.
The spectrum of $A_{n,r}$ consists of the eigenvalue $0$ with algebraic multiplicity $\ord_x\Phi_{n,r}$,
together with exactly $\lfloor (r+2)/3\rfloor$ regular $n$-gons,
all contained in the open disk of radius $(27/4)^{1/n}$, each $n$-gon having a vertex on the positive real axis.
\end{theorem}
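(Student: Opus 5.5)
The proof assembles results already established; no new computation is needed. By Proposition~\ref{prop:zero-mult}, the eigenvalue $0$ has algebraic multiplicity $\ord_x\Phi_{n,r}$. By Corollary~\ref{cor:nonzero-count}, there are exactly $n\lfloor (r+2)/3\rfloor$ nonzero eigenvalues counted with algebraic multiplicity. By Corollary~\ref{cor:simple-nonzero}, they are all simple, so there are $n\lfloor (r+2)/3\rfloor$ distinct nonzero eigenvalues.

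Next I will organize these eigenvalues into polygons. By Proposition~\ref{prop:charpoly-reduction}, the nonzero roots of $\Phi_{n,r}$ are exactly the roots of $x^n=\mu$, where $\mu$ ranges over the nonzero eigenvalues of $K_{n,r}$. Theorem~\ref{thm:K-pos-simple} shows that each such $\mu$ is real, positive and simple. For each $\mu$, the $n$ solutions are $\mu^{1/n}\omega^k$ with $\omega=\exp(2\pi i/n)$ and $k=0,\dots,n-1$. These are the vertices of a regular $n$-gon centred at the origin, with the vertex $\mu^{1/n}$ on the positive real axis. Distinct values of $\mu$ give distinct circumradii $\mu^{1/n}$, so the polygons are disjoint. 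The multiplicity bookkeeping then forces the number of distinct $\mu$ to equal $n\lfloor (r+2)/3\rfloor/n=\lfloor (r+2)/3\rfloor$. Equivalently, $\det(x^nI-K_{n,r})$ has exactly $\lfloor (r+2)/3\rfloor$ nonzero roots in $x^n$, each simple.

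For the containment, Theorem~\ref{thm:Tran} gives $0<\lambda^n<27/4$ for every nonzero eigenvalue $\lambda$, since $\lambda^n$ is real. Hence $|\lambda|=(\lambda^n)^{1/n}<(27/4)^{1/n}$. The eigenvalue $0$ is trivially inside the open disk, so the whole spectrum lies in the open disk of radius $(27/4)^{1/n}$.

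The only delicate point is keeping two different counts separate. The count in Corollary~\ref{cor:nonzero-count} is with algebraic multiplicity, while the number of polygons counts distinct values of $\mu$. Simplicity is what identifies the two: without it, a repeated $\mu$ would produce a repeated polygon rather than a new one. I will state explicitly that each $\mu$ contributes exactly $n$ simple roots, and that $\ord_x\det(x^nI-K_{n,r})$ is absorbed into the zero multiplicity (Remark~\ref{rem:nu-vs-ord}), so it does not affect the polygon count.
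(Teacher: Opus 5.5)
Your proposal is correct and is essentially the paper's proof. Both assemble Proposition~\ref{prop:zero-mult}, Corollary~\ref{cor:nonzero-count}, Proposition~\ref{prop:charpoly-reduction}, the positivity in Theorem~\ref{thm:K-pos-simple}, and the bound of Theorem~\ref{thm:Tran} in the same way; your explicit use of simplicity (Corollary~\ref{cor:simple-nonzero}) to turn the multiplicity count $n\lfloor (r+2)/3\rfloor$ into exactly $\lfloor (r+2)/3\rfloor$ distinct polygons simply makes precise a step the paper leaves implicit.
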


\begin{proof}
Proposition~\ref{prop:zero-mult} gives the algebraic multiplicity of the eigenvalue $0$, and
Corollary~\ref{cor:nonzero-count} gives the number of nonzero eigenvalue packets.
By Proposition~\ref{prop:charpoly-reduction}, the nonzero eigenvalues of $A_{n,r}$ are obtained by taking $n$th roots of the nonzero eigenvalues of $K_{n,r}$, so they form regular $n$-gons.
Theorem~\ref{thm:Tran} gives the open-disk bound, and the positivity of the nonzero eigenvalues of $K_{n,r}$ implies that each such $n$-gon has a vertex on the positive real axis.
\end{proof}

Figure~\ref{fig:spectrum_Gamma38} illustrates Theorem~\ref{thm:complex-spectrum} for $\Gamma_{3,8}$.

\begin{figure}[t]
\centering
\includegraphics[scale=.4]{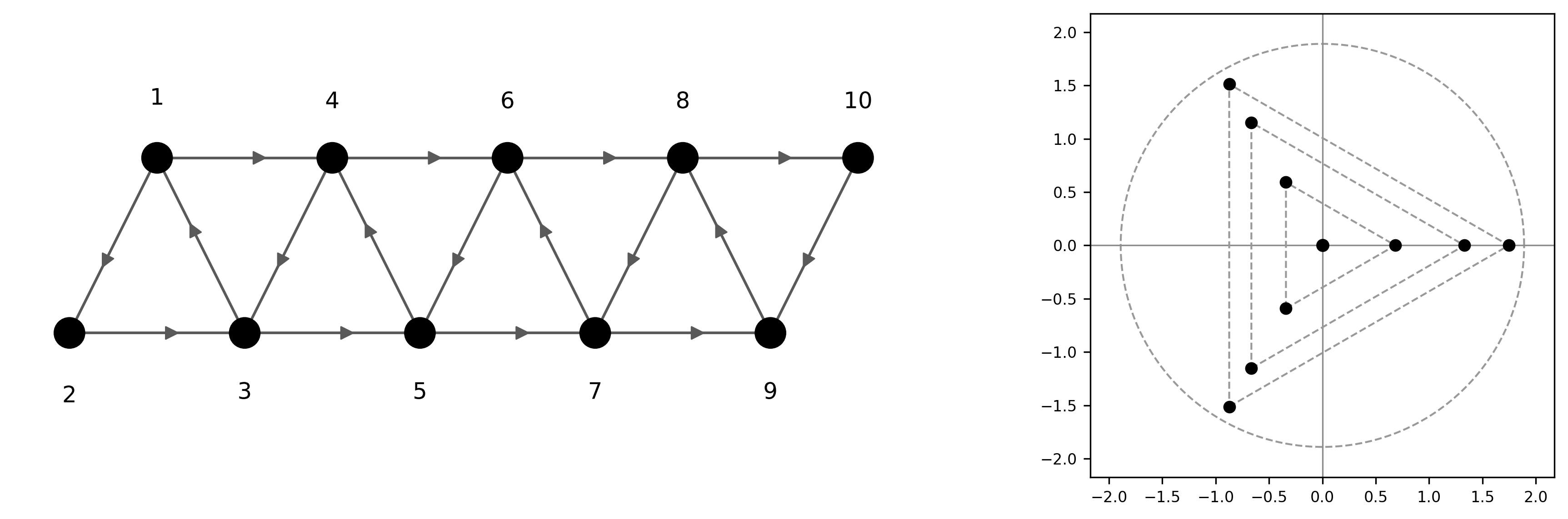}
\caption{The spectrum of $\Gamma_{3,8}$ and the dashed circle of radius $(27/4)^{1/3}$ in the complex plane.}
\label{fig:spectrum_Gamma38}
\end{figure}

\begin{remark}[Parity]\label{rem:parity}
A spectral $n$-gon has a \emph{vertex} on the negative real axis if and only if $n$ is even.
\end{remark}

\section{Reconstruction from the characteristic polynomial}\label{sec:reconstruction}
The three-term recursion from Section~\ref{sec:recursion-and-counts} leaves a rigid fingerprint in the top-degree part of $\Phi_{n,r}$.
In this section we show that this fingerprint determines the parameters $(n,r)$ from $\Phi_{n,r}$ alone.

\begin{lemma}[Binomial expansion for $\Phi_{n,r}$]\label{lem:Phi-binomial}
For every $n\ge 3$ and $r\ge 1$,
\[
\Phi_{n,r}(x)=
\sum_{j=0}^{\lfloor (r+2)/3\rfloor}
(-1)^j\binom{r+2-2j}{j}\,
x^{(n-2)r+2-nj}.
\]
\end{lemma}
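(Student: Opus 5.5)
The plan is to read the expansion directly off Lemma~\ref{lem:Phi-H}, which already packages $\Phi_{n,r}$ as a specialization of the coefficient polynomials $H_m(z)$ of $1/(1-t+zt^3)$. The only work is to find a closed form for $H_m(z)$ and then substitute $z=x^{-n}$, $m=r+2$.

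First, expand the generating function geometrically. Since
\[
\frac{1}{1-t+zt^3}=\frac{1}{1-t(1-zt^2)}=\sum_{k\ge0}t^k(1-zt^2)^k=\sum_{k\ge0}\sum_{j=0}^{k}(-1)^j\binom{k}{j}z^j t^{k+2j},
\]
collecting the coefficient of $t^m$ means setting $k=m-2j$, which gives
\[
H_m(z)=\sum_{j\ge0}(-1)^j\binom{m-2j}{j}z^j .
\]
The sum is finite because $\binom{m-2j}{j}=0$ once $m-2j<j$, i.e.\ $j>\lfloor m/3\rfloor$. This manipulation is legitimate as an identity of formal power series in $t$ with coefficients in $\mathbb Z[z]$. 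As a cross-check, one can verify that this closed form satisfies $H_0=H_1=H_2=1$ and $H_m=H_{m-1}-zH_{m-3}$ via Pascal's rule $\binom{m-2j}{j}=\binom{m-1-2j}{j}+\binom{m-1-2j}{j-1}$, the second term being the shifted index of $H_{m-3}$.

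Second, substitute $m=r+2$ and $z=x^{-n}$ into Lemma~\ref{lem:Phi-H}:
\[
\Phi_{n,r}(x)=x^{(n-2)r+2}\sum_{j=0}^{\lfloor (r+2)/3\rfloor}(-1)^j\binom{r+2-2j}{j}x^{-nj},
\]
and multiply the power of $x$ into each term to obtain the exponent $(n-2)r+2-nj$. For the upper limit, the range $0\le j\le\lfloor (r+2)/3\rfloor$ exactly matches the support found above, since terms beyond it vanish. Each exponent is nonnegative because $nj\le n(r+2)/3\le (n-2)r+2$ for $n\ge3$, $r\ge1$, so the result is a genuine polynomial, as it must be.

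I do not expect a real obstacle here, since the content is carried by Lemma~\ref{lem:Phi-H}. The only point needing care is the bookkeeping of the summation range and the binomial convention $\binom{a}{j}=0$ for $0\le a<j$. If one prefers to avoid generating functions entirely, the alternative is a direct induction on $r$ using Lemma~\ref{lem:Phi-rec} together with Pascal's rule, with base cases $r=1,2,3$ checked by hand. In that route the main step is matching the index shift $j\mapsto j-1$ in the $x^{2(n-3)}\Phi_{n,r-3}$ term, whose exponent $2(n-3)+(n-2)(r-3)+2-n(j-1)$ equals $(n-2)r+2-nj$.
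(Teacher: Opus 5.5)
Your proof is correct, but it takes a different route from the paper.

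\textbf{The paper's route.} It argues by direct induction. It defines $\Psi_{n,r}$ to be the right-hand side and uses Pascal's rule to check that $\Psi_{n,r}$ satisfies the three-term recursion of Lemma~\ref{lem:Phi-rec}. It then verifies $\Psi_{n,r}=\Phi_{n,r}$ for $r=1,2,3$ by hand. This is essentially the alternative you sketch in your last paragraph.

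\textbf{Your route.} You take Lemma~\ref{lem:Phi-H} as given, which is legitimate because it precedes this lemma and does not use it. You then extract coefficients from $1/(1-t(1-zt^2))$ by a geometric expansion, which yields the closed form $H_m(z)=\sum_{j=0}^{\lfloor m/3\rfloor}(-1)^j\binom{m-2j}{j}z^j$. The substitution $m=r+2$, $z=x^{-n}$ then finishes the proof.

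\textbf{Comparison.} Your route inherits the recursion and the base cases from Lemma~\ref{lem:Phi-H} rather than rechecking them. It also derives the binomial coefficients instead of guessing and verifying them, and it gives an explicit formula for every $H_m$ as a byproduct. This is in the same spirit as the coefficient extraction the paper uses for Proposition~\ref{prop:kNarayana-binomial}. The paper's route is more self-contained: it needs only Lemma~\ref{lem:Phi-rec} and Pascal's rule, with no generating functions, at the cost of the index bookkeeping and base-case checks. Both arguments ultimately rest on the same input, namely the recursion of Lemma~\ref{lem:Phi-rec} together with the values of $\Phi_{n,r}$ at $r=1,2,3$.

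\textbf{Minor point.} Writing $\sum_{j\ge0}$ is a slight abuse. Finiteness comes from the constraint $0\le j\le k=m-2j$ in the double sum, not from a vanishing convention for binomials with negative upper index. Your stated range $0\le j\le\lfloor m/3\rfloor$ is nonetheless the correct one.
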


\begin{proof}
For $n\ge3$ and $r\ge1$, let $\Psi_{n,r}\in\mathbb Z[x]$ denote the right-hand side.
Using the binomial identity
\[
\binom{m}{j}=\binom{m-1}{j}+\binom{m-1}{j-1},
\]
one checks that the family $\{\Psi_{n,r}\}_{r\ge1}$ satisfies the same three-term recursion as in Lemma~\ref{lem:Phi-rec}.
Moreover, $\Psi_{n,r}=\Phi_{n,r}$ for $r=1,2,3$ by direct verification.
Hence $\Psi_{n,r}=\Phi_{n,r}$ for all $r\ge1$.
\end{proof}

\begin{theorem}[Reconstruction from $\Phi_{n,r}$]\label{thm:reconstruct-Phi}
Suppose that a polynomial $\Phi\in\mathbb Z[x]$ is of the form $\Phi=\Phi_{n,r}$ for some $n\ge3$ and $r\ge1$.
Then the parameters $(n,r)$ are determined by $\Phi$.
\end{theorem}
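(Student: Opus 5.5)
We will read off $(n,r)$ from the top-degree terms of $\Phi$, using the binomial expansion of Lemma~\ref{lem:Phi-binomial}. By that lemma, $\Phi=\Phi_{n,r}$ is monic of degree
\[
D:=\deg\Phi=(n-2)r+2,
\]
and its nonzero coefficients can only occur in the degrees $D-nj$ for $0\le j\le\lfloor (r+2)/3\rfloor$. The coefficient of $x^{D-nj}$ is $(-1)^j\binom{r+2-2j}{j}$.

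The key step is the $j=1$ term. Its coefficient is $-\binom{r}{1}=-r$. Since $r\ge1$, this coefficient is nonzero, and because $\lfloor (r+2)/3\rfloor\ge1$ the index $j=1$ really does appear in the sum. So the second-highest nonzero coefficient of $\Phi$ sits in degree $D-n$ and equals $-r$. From $\Phi$ alone we can therefore extract three quantities:
\begin{itemize}
\item the degree $D$;
\item the gap $g$ between the leading degree and the next degree with a nonzero coefficient, which gives $g=n$;
\item the value of that next coefficient, which gives $-r$.
\end{itemize}
This determines both $n$ and $r$. As a consistency check, $D=(n-2)r+2$ recovers either parameter from the other. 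Since $n\ge3$, we have $n-2\ge1$, so $r=(D-2)/(n-2)$ is also forced once $n$ is known.

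I expect essentially no obstacle. The only point that needs care is confirming that no coefficient vanishes strictly between degrees $D$ and $D-n$, so that the gap really equals $n$. This is immediate from Lemma~\ref{lem:Phi-binomial}: the exponents are spaced exactly $n$ apart, and distinct $j$ give distinct exponents. It also relies on the $j=1$ coefficient $-r$ not vanishing.

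A possible alternative is to use $D$ together with the zero multiplicity from Proposition~\ref{prop:zero-mult}. That route is messier because of the floor functions, so I would use the coefficient argument above.
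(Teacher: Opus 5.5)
Your proposal is correct and is essentially the paper's proof: both use Lemma~\ref{lem:Phi-binomial} to see that the top two terms of $\Phi$ are $x^{(n-2)r+2}-r\,x^{(n-2)r+2-n}$, and read off $n$ as the exponent gap and $r$ from the second coefficient. Your extra checks, namely that the $j=1$ term is present because $\lfloor (r+2)/3\rfloor\ge1$ and that $-r\neq0$, just make explicit what the paper's one-line argument leaves implicit.
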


\begin{proof}
By Lemma~\ref{lem:Phi-binomial}, the two highest-degree terms of $\Phi$ are
$x^{(n-2)r+2}-r\,x^{(n-2)r+2-n}$.
Hence $n$ is the gap between the top two exponents, and $r$ is the absolute value of the coefficient of the second-highest term.
\end{proof}

This reconstruction also explains why the arithmetic problem in Section~\ref{sec:rational} can be phrased purely in terms of $\Phi_{n,r}(1)$.

\section{Rational nonzero eigenvalues}\label{sec:rational}

In this section we determine when the spectrum of $A_{n,r}$ contains a rational nonzero eigenvalue.
The polygonal symmetry from Theorem~\ref{thm:complex-spectrum} suggests that such an eigenvalue, if it exists,
must occur at a distinguished vertex, and we will see that the problem reduces to the single specialization $\Phi_{n,r}(1)$.

Let $\lambda\in\mathbb{Q}$ be a nonzero eigenvalue of $A_{n,r}$.
Since $\Phi_{n,r}$ is monic, every eigenvalue is an algebraic integer, hence $\lambda$ must be an integer.
By Theorem~\ref{thm:Tran} we have $|\lambda|< (27/4)^{1/n}\le (27/4)^{1/3}<2$.
Thus the only possible rational nonzero eigenvalues are $\lambda=\pm1$.
Moreover, the polygonal structure implies that $-1$ can occur only when $n$ is even and $1$ belongs to the same spectral $n$-gon.
Hence it suffices to decide when $1$ is an eigenvalue of $A_{n,r}$, equivalently when $\Phi_{n,r}(1)=0$.

\subsection{Padovan's spiral numbers and de Weger's vanishing theorem}\label{sec:spiralPadovan}

We use the Padovan sequence $\{P_m\}_{m\in\mathbb Z}$ normalized by
\[
P_0=P_1=P_2=1,\qquad P_{m+3}=P_{m+1}+P_m\quad(m\in\mathbb Z),
\]
extended to all integers by the same recursion.
This normalization is commonly called \emph{Padovan's spiral numbers} and is listed
as \cite[A134816]{OEIS-A134816}.

The only external input we need is the classification of the zeros of $\{P_m\}$.
The following vanishing criterion goes back to de Weger; for a convenient explicit statement, see \cite[Table~1]{BravoBravoLuca2022}.

\begin{theorem}[de Weger]
\label{thm:deweger-zeros}
For the Padovan sequence $\{P_m\}_{m\in\mathbb Z}$ above,
\[
P_m=0 \quad\Longleftrightarrow\quad m\in\{-1,-3,-4,-8,-17\}.
\]
\end{theorem}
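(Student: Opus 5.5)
The plan is to treat nonnegative and negative indices separately. The nonnegative side is elementary. The negative side is a Skolem-type finiteness problem, which I would settle with a $p$-adic (Skolem--Strassmann) argument rather than with linear forms in logarithms.

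\emph{Nonnegative indices and the listed zeros.} For $m\ge0$, induction on $P_{m+3}=P_{m+1}+P_m$ with $P_0=P_1=P_2=1$ gives $P_m\ge1$, so no zeros occur there. For negative indices I would run the recursion backwards as $P_m=P_{m+3}-P_{m+1}$ and tabulate. This gives $P_{-1}=0$, $P_{-2}=1$, $P_{-3}=0$, $P_{-4}=0$, $P_{-5}=1$, $P_{-6}=-1$, $P_{-7}=1$, $P_{-8}=0$, and so on down to $P_{-17}=0$. That confirms every index in $\{-1,-3,-4,-8,-17\}$ is a zero. The same table shows that no other index in the computed range, say $-60\le m<0$, is a zero.

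\emph{Negative indices via Strassmann.} I would set $Q_k:=P_{-k}$. This sequence satisfies the reversed recurrence $Q_{k+3}=Q_{k+1}... $ more precisely $Q_{k+3}=Q_k-Q_{k+2}$, whose characteristic polynomial is $y^3+y^2-1$, the reciprocal of $x^3-x-1$. Next I would choose a prime $p$ for which the companion matrix $C$ of this recurrence is invertible modulo $p$, and let $N$ be the order of $C$ modulo $p$, so that $C^N\equiv I\pmod p$. Then $C^N=I+pD$ with $D$ integral. For each residue class $k=k_0+Nj$, the binomial expansion of $(I+pD)^j$ makes $j\mapsto Q_{k_0+Nj}$ a power series in $j$ that converges on $\mathbb Z_p$. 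Strassmann's theorem then bounds the number of zeros of that series by the index of the last coefficient of maximal $p$-adic size. In most classes, $Q_{k_0}\not\equiv0\pmod p$, and those classes contain no zeros at all.

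\emph{Main obstacle.} The crux is choosing $p$, and if necessary combining two primes, so that every class containing a known zero has Strassmann bound exactly $1$. In practice this means checking that the linear coefficient $\equiv$ (a first-order difference over one period) has valuation exactly $1$. The zeros $-1,-3,-4,-8,-17$ must fall in distinct classes, or else the bound must match the number of known zeros in each class. I would first try small primes such as $p=2,3,5,7$ and compute $N$, the residues $Q_{k_0}\bmod p$, and the first two coefficients of each series. If a single prime fails because a class holds two known zeros (for instance $-3$ and $-4$ being close), I would intersect the constraints from two primes. Each class would then have at most one solution, and that solution is already exhibited. If the $p$-adic bookkeeping became unwieldy, the fallback is de Weger's original route: a Baker-type lower bound on $|\alpha^{-k}c_1+\cdots|$, reduced by LLL to a small explicit range covered by the table above.
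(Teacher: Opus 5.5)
The paper gives no proof of this statement; it quotes de Weger's classification as an external input. A self-contained Skolem--Strassmann argument is therefore a genuinely different and reasonable route. Your nonnegative-index argument and your backward table are correct, with $Q_k:=P_{-k}$ and $Q_{k+3}=Q_k-Q_{k+2}$.

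The gap is that the proposal stops exactly where the content of the theorem lies. No prime is exhibited and no Strassmann bound is computed, and the mechanics you describe do not work as stated for the primes you name.
\begin{itemize}
\item For $p=2$, $y^3+y^2+1$ is irreducible over $\mathbb{F}_2$, so $N=7$ and $Q\bmod 2$ vanishes exactly for $k\equiv 1,3,4\pmod 7$. The known zeros then collide: $1\equiv 8$ and $3\equiv 17\pmod 7$.
\item For $p=3$ one gets $N=13$ and $4\equiv 17\pmod{13}$. Your example of $-3$ and $-4$ colliding can never occur once $N\ge 2$.
\item Your test ``linear coefficient of valuation exactly $1$'' is valid only for odd $p$. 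At $p=2$ with $C^N=I+2D$, the term $4\binom{j}{2}c_2=(2j^2-2j)c_2$ can give the $j^2$-coefficient valuation $1$ as well. So that test no longer forces Strassmann bound $1$.
\item Intersecting constraints from a second prime does not repair collisions. Refining a class by a modulus prime to $p$ merely reparametrizes $\mathbb{Z}_p$ by a unit affine map, which leaves the Strassmann bound unchanged.
\item You never say how to dispose of classes with $Q_{k_0}\equiv 0\pmod p$ that contain no known zero.
\end{itemize}

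The missing idea is to refine classes by powers of $p$, which separates $p$-adically distinct zeros. With that, $p=2$ alone suffices, using $C^{14}\equiv I\pmod 4$ and $C^{28}\equiv I\pmod 8$.
\begin{itemize}
\item Classes $0,2,5,6\bmod 7$ have $Q$ odd, so they contain no zeros.
\item Classes $10,11\bmod 14$ have $Q\equiv 2\pmod 4$, since $Q_{10}=2$ and $Q_{11}=-2$.
\item In classes $1,4,8\bmod 14$, the $j$-coefficient has valuation exactly $2$, because $Q_{15}-Q_1=4$, $Q_{18}-Q_4=4$ and $Q_{22}-Q_8=-4$. All higher coefficients have valuation at least $3$, so each class has exactly one zero.
\item Class $3\bmod 14$ splits into $3\bmod 28$ and $17\bmod 28$. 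There $Q_{31}-Q_3=16$ and $Q_{45}-Q_{17}=-208$ both have $2$-adic valuation $4$, so the $j$-coefficient has valuation $4$. The coefficients of $j^i$ for $i\ge 2$ have valuation at least $5$, so again each subclass has exactly one zero.
\end{itemize}
With these computations inserted, your argument becomes a complete elementary proof, independent of the Baker-type machinery behind de Weger's result. Without them it is a plan rather than a proof.
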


\subsection{From $\Phi_{n,r}(1)$ to Padovan}\label{subsec:Phi1-padovan}

The bridge from our characteristic polynomials to Padovan arithmetic is a specialization at $x=1$.
The recurrence in the gluing parameter $r$ collapses to the Padovan recursion, and the initial values match.

\begin{lemma}[Padovan values at $x=1$]\label{lem:padovan-values}
For every $n\ge3$ and $r\ge1$,
\[
\Phi_{n,r}(1)=(-1)^r\,P_{-r-7}.
\]
\end{lemma}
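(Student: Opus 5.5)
The plan is to specialize the recursion of Lemma~\ref{lem:Phi-rec} at $x=1$ and to match it with the Padovan recursion read backwards. Set $a_r:=\Phi_{n,r}(1)$. At $x=1$ every power of $x$ equals $1$, so for $r\ge4$ the lemma gives
\[
a_r=a_{r-1}-a_{r-3}.
\]
This is independent of $n$. Alternatively, Lemma~\ref{lem:Phi-H} gives $a_r=H_{r+2}(1)$, where $\sum_m H_m(1)t^m=1/(1-t+t^3)$. That form makes the $n$-independence immediate and also settles the initial values.

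Next I will check that $b_r:=(-1)^rP_{-r-7}$ satisfies the same recursion. Reading $P_{m+3}=P_{m+1}+P_m$ backwards gives $P_m=P_{m+3}-P_{m+1}$. Taking $m=-r-7$ yields
\[
P_{-r-7}=P_{-r-4}-P_{-r-6}.
\]
Multiplying by $(-1)^r$ gives
\[
b_r=(-1)^rP_{-(r-3)-7}-(-1)^rP_{-(r-1)-7}=-b_{r-3}+b_{r-1},
\]
since $(-1)^r=-(-1)^{r-3}$ and $(-1)^r=-(-1)^{r-1}$. So $b_r=b_{r-1}-b_{r-3}$, which is exactly the recursion for $a_r$.

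It then remains to match the three initial values $r=1,2,3$, after which induction finishes the proof. From $H_0=H_1=H_2=1$ and $H_m=H_{m-1}-H_{m-3}$ we get $H_3=0$, $H_4=-1$, $H_5=-2$, so $a_1=0$, $a_2=-1$, $a_3=-2$. These agree with the values of Lemma~\ref{lem:Phi-binomial} at $x=1$; for example, $r=1$ gives $1-3=-2$, which is $H_3$. I need to recheck the indexing here. Lemma~\ref{lem:Phi-binomial} with $r=1$ gives $\binom30-\binom11=0$, so $a_1=0$. For $r=2$ it gives $\binom40-\binom21=-1$, and for $r=3$ it gives $\binom50-\binom31=-2$. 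These are consistent with the $H$ values.

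On the Padovan side, I extend $P$ backwards from $P_0=P_1=P_2=1$ using $P_m=P_{m+3}-P_{m+1}$:
\[
P_{-1}=0,\quad P_{-2}=1,\quad P_{-3}=0,\quad P_{-4}=0,\quad P_{-5}=1,\quad P_{-6}=-1,\quad P_{-7}=1,\quad P_{-8}=0,\quad P_{-9}=-1,\quad P_{-10}=2.
\]
This gives $b_1=-P_{-8}=0$, $b_2=P_{-9}=-1$ and $b_3=-P_{-10}=-2$, which match $a_1,a_2,a_3$. As a cross-check, $P_{-8}=0$ is one of de Weger's zeros in Theorem~\ref{thm:deweger-zeros}, consistent with $a_1=0$ (the $n$-cycle has eigenvalue $1$).

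The only real obstacle is bookkeeping: getting the backward Padovan values and the sign $(-1)^r$ exactly right. I will therefore compute the backward table carefully and confirm one further term, $r=4$: $a_4=a_3-a_1=-2$, and $b_4=P_{-11}=P_{-8}-P_{-10}=-2$, which agree.
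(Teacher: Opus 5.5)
Your proof is correct and follows the paper's own route: specialize the recursion of Lemma~\ref{lem:Phi-rec} at $x=1$ to get $a_r=a_{r-1}-a_{r-3}$, verify via $P_m=P_{m+3}-P_{m+1}$ that $(-1)^rP_{-r-7}$ satisfies the same recursion, and match the initial values $0,-1,-2$ at $r=1,2,3$ before inducting. Your backward Padovan table and the sign bookkeeping are right. In a final write-up, delete the stray ``$1-3=-2$'' remark, which you already corrected.
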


\begin{proof}
Evaluating the recursion in Lemma~\ref{lem:Phi-rec} at $x=1$ gives a three-term recursion in $r$ for $\Phi_{n,r}(1)$.
The same recursion holds for $(-1)^rP_{-r-7}$.
Since the two sequences agree for $r=1,2,3$ by direct verification, they agree for all $r$ by induction.
\end{proof}

We write $\Spec(M)$ for the spectrum of a matrix $M$, and $\mathbb{Q}^{\times}$ for the set of nonzero rational numbers.

\begin{theorem}[Padovan criterion]\label{thm:padovan-main}
Let $n\ge3$ and $r\ge1$. Then
\[
\Spec(A_{n,r})\cap\mathbb Q^\times\neq\emptyset
\quad\Longleftrightarrow\quad
1\in\Spec(A_{n,r})
\quad\Longleftrightarrow\quad
r\in\{1,10\}.
\]
\end{theorem}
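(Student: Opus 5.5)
The plan is to prove the two equivalences separately. The first rests on the discussion opening this section; the second is a direct application of Lemma~\ref{lem:padovan-values} together with Theorem~\ref{thm:deweger-zeros}.

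\emph{First equivalence.} The implication ``$\Leftarrow$'' is trivial, since $1\in\mathbb Q^\times$. For ``$\Rightarrow$'', let $\lambda\in\mathbb Q^\times$ be an eigenvalue. Since $\Phi_{n,r}$ is monic in $\mathbb Z[x]$, $\lambda$ is an integer. Theorem~\ref{thm:Tran} gives $0<\lambda^n<27/4$ with $\lambda^n$ real, so $|\lambda|<(27/4)^{1/3}<2$, hence $\lambda=\pm1$. In either case $\lambda^n=1$. By Proposition~\ref{prop:charpoly-reduction}, $1$ is then an eigenvalue of $K_{n,r}$, and every $n$th root of $1$, in particular $1$ itself, is an eigenvalue of $A_{n,r}$. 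Put directly: $\Phi_{n,r}(x)=x^{\nu_{n,r}}\det(x^nI-K_{n,r})$ depends on nonzero roots only through $x^n$, so $\Phi_{n,r}(\lambda)=0$ with $\lambda^n=1$ forces $\Phi_{n,r}(1)=0$.

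\emph{Second equivalence.} Now $1\in\Spec(A_{n,r})$ if and only if $\Phi_{n,r}(1)=0$. By Lemma~\ref{lem:padovan-values} this is equivalent to $P_{-r-7}=0$. By Theorem~\ref{thm:deweger-zeros}, that holds if and only if $-r-7\in\{-1,-3,-4,-8,-17\}$, that is, $r\in\{-6,-4,-3,1,10\}$. Intersecting with $r\ge1$ leaves $r\in\{1,10\}$. As a sanity check, for $r=1$ the graph is a directed $n$-cycle, whose spectrum is the $n$th roots of unity, so $1$ is indeed an eigenvalue. For $r=10$ one can confirm the vanishing via Lemma~\ref{lem:Phi-binomial} at $x=1$: the sum $\sum_j(-1)^j\binom{12-2j}{j}=1-10+28-20+1=0$.

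\emph{Main obstacle.} The only delicate point is the reduction from $-1$ to $1$ in the first equivalence. It must not rely on the informal ``same $n$-gon'' remark. Instead I will argue explicitly through the factorization in Proposition~\ref{prop:charpoly-reduction}. Since $\lambda\neq0$ it contributes a root of $\det(x^nI-K_{n,r})$, and this determinant is a polynomial in $x^n$, so it vanishes at $x=1$ whenever it vanishes at $\lambda$ with $\lambda^n=1$. Everything else is bookkeeping with the cited results.
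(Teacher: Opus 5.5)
Your proposal is correct and follows the paper's proof: integrality of a rational eigenvalue plus the Tran bound reduces the question to $\Phi_{n,r}(1)=0$, and Lemma~\ref{lem:padovan-values} together with de Weger's classification (Theorem~\ref{thm:deweger-zeros}) then gives $r\in\{1,10\}$. The only difference is that you make the passage from $-1$ to $1$ explicit, using $\lambda^n>0$ and the factorization $\Phi_{n,r}=x^{\nu_{n,r}}\det(x^nI-K_{n,r})$, where the paper appeals informally to the polygonal structure; this is a slightly cleaner version of the same step.
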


\begin{proof}
As explained at the beginning of the section, to decide whether $\Spec(A_{n,r})\cap\mathbb{Q}^\times$ is nonempty,
it suffices to determine when $1$ is an eigenvalue of $A_{n,r}$, i.e., when $\Phi_{n,r}(1)=0$.

By Lemma~\ref{lem:padovan-values} we have $\Phi_{n,r}(1)=(-1)^r P_{-r-7}$, hence $\Phi_{n,r}(1)=0$ if and only if $P_{-r-7}=0$.
Now de Weger’s classification (Theorem~\ref{thm:deweger-zeros}) implies $r\in\{1,10\}$.
\end{proof}

This also explains the exceptional case $r=10$ illustrated below.

\begin{example}[The exceptional value $r=10$]\label{ex:r10}
At $r=10$ the rational eigenvalues are visible in the characteristic polynomial.
For instance, when $(n,r)=(4,10)$ one finds
\[
\Phi_{4,10}(x)=x^{6}(x-1)(x+1)(x^2+1)\bigl(x^{12}-9x^8+19x^4-1\bigr),
\]
so $x^4-1$ divides $\Phi_{4,10}(x)$ and $\pm1$ indeed occur as eigenvalues.
\end{example}

\appendix
\section{An arithmetic viewpoint}\label{app:kNarayana}
The $n=3$ characteristic factors satisfy a Narayana-type recursion in the gluing parameter.
This appendix records the corresponding $k$-Narayana numbers and packages the few arithmetic facts we use later.

\subsection{$k$-Narayana numbers: definition and generating function}
We use the $k$-Narayana numbers introduced in \cite{RamirezSirvent2015}.

\begin{definition}[$k$-Narayana polynomials {\cite[Section~1]{RamirezSirvent2015}}]\label{def:kNarayana}
Define $\{N_m\}_{m\ge0}\subset\mathbb Z[k]$ by
\[
N_0=0,\qquad N_1=1,\qquad N_2=k,\qquad
N_m=k\,N_{m-1}+N_{m-3}\quad(m\ge3).
\]
\end{definition}

The generating function and coefficient formula below will be used to compare these polynomials with the characteristic factors in the case $n=3$.

\begin{proposition}[Generating function {\cite[Eq.~(2.4)]{RamirezSirvent2015}}]\label{prop:kNarayana-gf}
One has
\[
\sum_{m\ge0}N_m\,z^m=\frac{z}{1-kz-z^3}.
\]
\end{proposition}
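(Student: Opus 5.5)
The plan is to convert the defining linear recursion into a functional equation for the ordinary generating function $F(z):=\sum_{m\ge0}N_m z^m$ in the formal power series ring $\mathbb Z[k][[z]]$, and then solve for $F$. This works because $1-kz-z^3$ has constant term $1$, so it is invertible in $\mathbb Z[k][[z]]$. Consequently, the identity $(1-kz-z^3)F(z)=z$ determines $F$ uniquely as $z/(1-kz-z^3)$. It therefore suffices to check that identity coefficientwise.

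Concretely, I will compute the coefficient of $z^m$ in $(1-kz-z^3)F(z)$. It equals $N_m-kN_{m-1}-N_{m-3}$, with the convention that $N_j=0$ for $j<0$. I will then treat the small indices one at a time:
\begin{itemize}
\item for $m=0$ the coefficient is $N_0=0$;
\item for $m=1$ it is $N_1-kN_0=1$;
\item for $m=2$ it is $N_2-kN_1=k-k=0$.
\end{itemize}
For $m\ge3$ the coefficient is $N_m-kN_{m-1}-N_{m-3}$, which vanishes by the recursion in Definition~\ref{def:kNarayana}. Hence the product $(1-kz-z^3)F(z)$ is exactly $z$.

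Finally, I will multiply both sides by the formal inverse of $1-kz-z^3$, which yields the claimed closed form. If one wants an analytic rather than formal statement, the same identity holds for $|z|$ small, depending on $k$. This follows because the $N_m$ grow at most geometrically, as an easy induction using $|N_m|\le C^m$ shows. The only step needing any care is the bookkeeping of the initial terms $m=0,1,2$: the numerator $z$ comes precisely from $N_1=1$ together with $N_2=kN_1$. Beyond that, there is no real obstacle.
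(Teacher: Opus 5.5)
Your proof is correct. In $\mathbb Z[k][[z]]$, the coefficient of $z^m$ in $(1-kz-z^3)F(z)$ is $N_m-kN_{m-1}-N_{m-3}$. This equals $0,1,0$ for $m=0,1,2$ and vanishes for $m\ge3$ by the recursion; the case $m=3$ correctly uses $N_0=0$. Invertibility of $1-kz-z^3$ then gives the closed form.

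The paper gives no argument of its own here. It simply imports the identity from Ram\'irez--Sirvent, Eq.~(2.4). Relative to that citation, your direct coefficient comparison makes the appendix self-contained. It also confirms that the normalization $N_0=0$, $N_1=1$, $N_2=k$ fixed in Definition~\ref{def:kNarayana} is exactly the one producing the numerator $z$; a shifted indexing in the source would change the numerator. The citation, for its part, buys brevity.

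Only the formal identity is needed downstream, since the proof of Proposition~\ref{prop:kNarayana-binomial} expands $z/(1-kz-z^3)$ purely as a formal power series. Your analytic remark via $|N_m|\le C^m$ is valid but unnecessary.
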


\begin{proposition}[Binomial expansion]\label{prop:kNarayana-binomial}
For every $m\ge1$,
\[
N_m=\sum_{j=0}^{\lfloor (m-1)/3\rfloor}\binom{m-1-2j}{j}\,k^{\,m-1-3j}.
\]
\end{proposition}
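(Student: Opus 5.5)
We argue directly from the generating function in Proposition~\ref{prop:kNarayana-gf}, extracting coefficients by a geometric-series expansion and then a binomial expansion. Writing
\[
\frac{z}{1-kz-z^3}=z\sum_{s\ge0}(kz+z^3)^s
=\sum_{s\ge0}\sum_{j=0}^{s}\binom{s}{j}k^{\,s-j}z^{\,1+s+2j},
\]
we collect terms with $1+s+2j=m$, i.e.\ $s=m-1-2j$. The coefficient of $z^m$ is then
\[
\sum_{j}\binom{m-1-2j}{j}k^{\,m-1-3j}.
\]
The binomial coefficient vanishes unless $0\le j\le m-1-2j$, that is $j\le\lfloor (m-1)/3\rfloor$. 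This gives exactly the stated range.

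As an independent check that does not rely on the cited generating function, we run an induction directly on Definition~\ref{def:kNarayana}. Call the right-hand side $R_m$. First verify $R_1=1$ and $R_2=k$; for $m=3$ the range of $j$ is $\{0\}$, so $R_3=\binom{2}{0}k^2=k^2=kN_2+N_0$. For $m\ge4$ we need $R_m=kR_{m-1}+R_{m-3}$. Compare the $k^{m-1-3j}$ coefficients on each side: the term $kR_{m-1}$ contributes $\binom{m-2-2j}{j}$, and the term $R_{m-3}$ contributes $\binom{m-4-2(j-1)}{j-1}=\binom{m-2-2j}{j-1}$. Pascal's rule $\binom{m-2-2j}{j}+\binom{m-2-2j}{j-1}=\binom{m-1-2j}{j}$ closes the induction.

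The only point needing care is the boundary of the summation range. At the top index $j=\lfloor(m-1)/3\rfloor$, one of the two Pascal summands may fall outside its own sum's range. This is harmless, because out-of-range binomial coefficients vanish; the same convention disposes of the case $j=0$, where the $j-1$ term is zero. We expect no genuine obstacle beyond this bookkeeping.
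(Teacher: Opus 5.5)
Your proof is correct and is essentially the paper's. Both extract the coefficient of $z^m$ from $z/(1-kz-z^3)$ via a double series: the paper expands $\sum_{j\ge0} z^{3j+1}(1-kz)^{-(j+1)}$ with the negative binomial series, you expand $z\sum_{s\ge0}(kz+z^3)^s$ with the ordinary binomial theorem, and the two double sums coincide under $q=s-j$. Your supplementary Pascal-rule induction is also sound, including the vanishing boundary term $\binom{j-1}{j}$ at $j=(m-1)/3$ when $m\equiv 1\pmod 3$, and it has the mild advantage of working directly from Definition~\ref{def:kNarayana} without the cited generating function.
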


\begin{proof}
Expand Proposition~\ref{prop:kNarayana-gf} as
\[
\frac{z}{1-kz-z^3}
=\sum_{j\ge0}\frac{z^{3j+1}}{(1-kz)^{j+1}}
=\sum_{j\ge0}z^{3j+1}\sum_{q\ge0}\binom{q+j}{j}(kz)^q,
\]
using the binomial-series expansion, and read off the coefficient of $z^m$ (so $q=m-1-3j$).
\end{proof}

\subsection{A direct dictionary with $\Phi_{3,r}$}\label{app:Phi3-kN}

Here we use our convention $\Phi_{n,r}(x)=\det(xI-A_{n,r})$.

\begin{proposition}[A dictionary for $n=3$]\label{prop:N-Phi3}
For every $m\ge4$,
\[
N_m=(-1)^{m+1}\,\Phi_{3,m-3}(-k).
\]
\end{proposition}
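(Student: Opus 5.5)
The plan is to compare the two binomial expansions we already have, namely Lemma~\ref{lem:Phi-binomial} for $\Phi_{n,r}$ and Proposition~\ref{prop:kNarayana-binomial} for $N_m$, term by term. No recursion argument is needed, since both sides are given in closed form.

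First, I specialize Lemma~\ref{lem:Phi-binomial} to $n=3$ and $r=m-3$, which is admissible because $m\ge4$ gives $r\ge1$. The exponent becomes $(n-2)r+2-nj=m-1-3j$. The binomial coefficient becomes $\binom{r+2-2j}{j}=\binom{m-1-2j}{j}$. The summation range becomes $0\le j\le\lfloor (r+2)/3\rfloor=\lfloor (m-1)/3\rfloor$. Hence
\[
\Phi_{3,m-3}(x)=\sum_{j=0}^{\lfloor (m-1)/3\rfloor}(-1)^j\binom{m-1-2j}{j}x^{m-1-3j}.
\]
This already has exactly the same index set and the same binomial coefficients as the formula for $N_m$ in Proposition~\ref{prop:kNarayana-binomial}.

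Next, I substitute $x=-k$. The $j$-th term acquires the sign $(-1)^j(-1)^{m-1-3j}=(-1)^{m-1-2j}=(-1)^{m+1}$, which is independent of $j$. So the sign factors out of the sum:
\[
\Phi_{3,m-3}(-k)=(-1)^{m+1}\sum_{j=0}^{\lfloor (m-1)/3\rfloor}\binom{m-1-2j}{j}k^{m-1-3j}=(-1)^{m+1}N_m.
\]
Multiplying both sides by $(-1)^{m+1}$ gives the claim.

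There is no real obstacle here. The only points to check are the parity bookkeeping, that is, that the $j$-dependence of the sign cancels, and that the upper summation limits coincide. The latter holds because $(r+2)/3=(m-1)/3$.

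As a cross-check one could argue by recursion instead. Evaluating Lemma~\ref{lem:Phi-rec} at $n=3$ gives $\Phi_{3,r}=x\Phi_{3,r-1}-\Phi_{3,r-3}$. With $x=-k$ and the sign twist $(-1)^{m+1}$, this becomes $N_m=kN_{m-1}+N_{m-3}$. Three initial values would then still need to be checked. The binomial route avoids that check, so I would use it.
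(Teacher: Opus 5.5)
Your proposal is correct and follows the paper's own proof: specialize Lemma~\ref{lem:Phi-binomial} to $n=3$, $r=m-3$, substitute $x=-k$, and note that the sign $(-1)^{m-1-2j}=(-1)^{m+1}$ does not depend on $j$, so the sum matches Proposition~\ref{prop:kNarayana-binomial}. Your explicit check that the summation limits agree, since $\lfloor (r+2)/3\rfloor=\lfloor (m-1)/3\rfloor$, is a detail the paper leaves implicit.
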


\begin{proof}
Specializing Lemma~\ref{lem:Phi-binomial} to $n=3$ and $r=m-3$ gives
\[
\Phi_{3,m-3}(x)=\sum_{j=0}^{\lfloor (m-1)/3\rfloor}(-1)^j\binom{m-1-2j}{j}\,x^{\,m-1-3j}.
\]
Substituting $x=-k$ yields
\[
\Phi_{3,m-3}(-k)=\sum_{j=0}^{\lfloor (m-1)/3\rfloor}(-1)^{m-1-2j}\binom{m-1-2j}{j}\,k^{\,m-1-3j}.
\]
Multiplying by $(-1)^{m+1}$ removes the remaining sign, since $(-1)^{m+1}(-1)^{m-1-2j}=1$.
Therefore $N_m=(-1)^{m+1}\Phi_{3,m-3}(-k)$, as claimed.
\end{proof}

Proposition~\ref{prop:N-Phi3} identifies the $n=3$ characteristic factors with $k$-Narayana polynomials up to a simple sign twist,
so we can import standard root-information for $N_m(k)$.

\subsection{A root-property package for $k$-Narayana}\label{app:kNarayana-roots}

The next proposition bundles the three basic “root statistics” of $N_m(k)$ that we will use:
the multiplicity at $k=0$, simplicity of nonzero roots, and the complete list of rational nonzero roots.

\begin{proposition}[Root properties of $N_m$]\label{prop:kNarayana-root-package}
Let $m\ge 1$. Then:
\begin{enumerate}
\item[\textup{(i)}] The root $k=0$ has multiplicity
\[
\ord_{k} N_m=
\begin{cases}
0,& m\equiv 1\pmod 3,\\
1,& m\equiv 2\pmod 3,\\
2,& m\equiv 0\pmod 3.
\end{cases}
\]
\item[\textup{(ii)}] Every \emph{nonzero} root of $N_m$ is simple.
\item[\textup{(iii)}] The only possible rational nonzero root of $N_m$ is $k=-1$, and it occurs if and only if
\[
m\in\{4,13\}.
\]
\end{enumerate}
\end{proposition}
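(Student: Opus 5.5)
The plan is to transport each item through the dictionary of Proposition~\ref{prop:N-Phi3}, $N_m(k)=(-1)^{m+1}\Phi_{3,m-3}(-k)$ for $m\ge4$. Under $k\mapsto -x$, roots of $N_m$ correspond bijectively to roots of $\Phi_{3,m-3}$ with the same multiplicities, and rationality is preserved. The small cases $m=1,2,3$, where $N_m\in\{1,k,k^2\}$, I will check by hand: they give orders $0,1,2$, they have no nonzero roots, and they have no rational nonzero roots, all consistent with the statement.

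For (i), I would take $n=3$ and $r=m-3$ in Proposition~\ref{prop:zero-mult}. This gives
\[
\ord_x\Phi_{3,r}=3r-2r+2-3\Bigl\lfloor\tfrac{r+2}{3}\Bigr\rfloor=r+2-3\Bigl\lfloor\tfrac{r+2}{3}\Bigr\rfloor,
\]
which is the residue of $r+2=m-1$ modulo $3$. So it equals $0,1,2$ according as $m\equiv1,2,0\pmod 3$. As a cross-check, the same answer can be read off Proposition~\ref{prop:kNarayana-binomial}. The lowest power of $k$ is $k^{m-1-3j_{\max}}$ with $j_{\max}=\lfloor(m-1)/3\rfloor$, and its coefficient is $\binom{m-1-2j_{\max}}{j_{\max}}\neq0$ because $m-1-2j_{\max}\ge j_{\max}$.

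For (ii), Corollary~\ref{cor:simple-nonzero} with $n=3$ says that every nonzero root of $\Phi_{3,m-3}$ is simple, and this transfers directly to $N_m$. I could instead give a self-contained route: by Lemma~\ref{lem:Phi-H}, $N_m(k)$ is, up to sign and a power of $k$, equal to $k^{\,m-1}H_{m-1}(-k^{-3})$. A repeated nonzero root of $N_m$ would then force either a repeated root of $H_{m-1}$ or a collision among cube roots, and both are excluded by Theorem~\ref{thm:K-pos-simple}. The corollary is cleaner, so I will use it.

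For (iii), the first step is integrality. Since $N_m$ is monic in $k$ with integer coefficients (its leading term is $k^{m-1}$), any rational root is an integer. Theorem~\ref{thm:Tran} gives $|x|<(27/4)^{1/3}<2$ for every nonzero root $x$ of $\Phi_{3,m-3}$, so $k=-x\in\{\pm1\}$. The value $k=1$ is impossible because $N_m(1)>0$: the recursion then has positive terms for $m\ge1$. Alternatively, $x=-1$ cannot be a root, since each spectral triangle has a vertex on the positive real axis and $n=3$ is odd (Remark~\ref{rem:parity}). So the only candidate is $k=-1$, which corresponds to $x=1$. By Theorem~\ref{thm:padovan-main}, $\Phi_{3,r}(1)=0$ exactly when $r\in\{1,10\}$, that is, when $m\in\{4,13\}$.

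The main obstacle is bookkeeping rather than new mathematics. I need to confirm that the index shift $r=m-3$ and the sign twist are applied consistently, and that the boundary cases $m\le3$ lie outside the dictionary and agree with the formulas. I also want to confirm that the ``only possible'' clause excludes $k=1$, for which the positivity argument $N_m(1)>0$ is simplest.
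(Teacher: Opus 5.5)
Your argument is correct. For (i) and (ii) it is essentially the paper's. Part (ii) is the same transfer of Corollary~\ref{cor:simple-nonzero} through the dictionary of Proposition~\ref{prop:N-Phi3}. Your binomial cross-check in (i) is the paper's proof of (i), and you also make explicit that the lowest coefficient $\binom{m-1-2j_{\max}}{j_{\max}}$ is nonzero, which the paper leaves implicit. Routing (i) through Proposition~\ref{prop:zero-mult} is an equally valid alternative, though the binomial argument is more self-contained.

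The real difference is in (iii). The paper reduces to $k=\pm1$ by the rational root theorem, asserting that $N_m/k^{\ord_k N_m}$ has constant term $1$. You instead use monicity to force integrality, and then the bound $|x|<(27/4)^{1/3}<2$ from Theorem~\ref{thm:Tran}, as at the opening of Section~\ref{sec:rational}.

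Your route is the one that actually works. That constant term is $\binom{m-1-2j}{j}$ with $j=\lfloor (m-1)/3\rfloor$, which equals $j+1$ or $\binom{j+2}{2}$ when $m\equiv 2$ or $0\pmod 3$. For example, $N_5=k(k^3+2)$, $N_6=k^2(k^3+3)$ and $N_8=k(k^6+5k^3+3)$. So the rational root theorem alone leaves candidates such as $-2$ or $\pm3$, which the paper's proof never rules out (the stated conclusion is still true).

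The paper's intended argument would have been purely elementary. Yours gives that up in favour of a dependence on Tran's theorem, and in exchange gets a bound valid uniformly for every $m\ge4$. If you want to avoid Tran here, there is an elementary substitute. For an integer $k$ with $|k|\ge2$, the recursion gives $|N_m|\ge 2|N_{m-1}|-|N_{m-3}|$. Induction from $1<|k|<k^2$ then shows that $|N_m|$ is strictly increasing, so $N_m(k)\neq0$.
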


\begin{proof}
(i) By Proposition~\ref{prop:kNarayana-binomial} we have, for every $m\ge1$,
\[
N_m=\sum_{j=0}^{\lfloor (m-1)/3\rfloor}\binom{m-1-2j}{j}\,k^{\,m-1-3j}.
\]
The smallest exponent of $k$ among the terms is attained when $j=\lfloor (m-1)/3\rfloor$, hence
\[
\ord_k N_m = (m-1)-3\Big\lfloor \frac{m-1}{3}\Big\rfloor,
\]
which is exactly $0,1,2$ according as $m\equiv 1,2,0\pmod 3$.

\textup{(ii)} The cases $m=1,2,3$ are immediate. For $m\ge4$ we use the dictionary in Proposition~\ref{prop:N-Phi3}:
\[
N_m=(-1)^{m+1}\,\Phi_{3,m-3}(-k).
\]
Since the prefactor $(-1)^{m+1}$ is nonzero, a nonzero $k_0$ is a root of $N_m$ if and only if
$x=-k_0$ is a nonzero root of $\Phi_{3,m-3}(x)$, i.e., $-k_0$ is a nonzero eigenvalue of $A_{3,m-3}$.
By Corollary~\ref{cor:simple-nonzero} in the main text, every nonzero eigenvalue of $A_{3,m-3}$ is simple, hence
$\Phi_{3,m-3}$ has a simple root at $x=-k_0$. Therefore $N_m$ has a simple root at $k=k_0$.

\textup{(iii)}
By \textup{(i)}, after removing the maximal power of $k$ from $N_m$ the remaining polynomial has constant term $1$,
so any rational root must be an integer dividing $1$, hence belongs to $\{\pm 1\}$.
Moreover $N_m(1)>0$ for all $m$ (the recursion at $k=1$ produces a strictly positive integer sequence), so $k=1$ is never a root.
Finally,
\[
N_m(-1)=0
\iff
\Phi_{3,m-3}(1)=0
\iff
m-3\in\{1,10\}
\iff
m\in\{4,13\},
\]
where the middle equivalence is exactly the Padovan criterion (Theorem~\ref{thm:padovan-main}) specialized to $n=3$.
\end{proof}

\section{A worked example: the four-piece kit}\label{app:examples}

In this appendix we work out the example \(\Gamma_{3,8}\) explicitly, illustrating the layer partition, the induced \(3\)-cyclic block form, the block factors, and the cyclic core.
Within each layer, vertices are listed in increasing order.

\paragraph{The digraph $\Gamma_{3,8}$}
Let us work out explicitly the example $\Gamma_{3,8}$ shown in Figure~\ref{fig:spectrum_Gamma38}.
The left panel depicts the digraph and the right panel shows the corresponding nonzero spectrum.

For $(n,r)=(3,8)$ we have $|V(\Gamma_{3,8})|=rn-2r+2=10$, hence $V(\Gamma_{3,8})=\{1,2,\dots,10\}$.

By the construction in Section~\ref{subsec:Gamma}, the directed edge set is
\[
\begin{aligned}
E(\Gamma_{3,8})=\{&
(1,2),(2,3),(3,1),\ (1,4),(4,3),\
(3,5),(5,4),\ (4,6),(6,5),\\ &(5,7),(7,6),\
(6,8),(8,7),\ (7,9),(9,8),\ (8,10),(10,9)\}.
\end{aligned}
\]

The grading records the layer of each vertex in the period-\(3\) structure, so that every edge advances the layer index by one modulo \(3\).

\paragraph{A grading and layers}
Define $\ell:V(\Gamma_{3,8})\to \mathbb Z/3\mathbb Z$ by
\[
\ell(1)=0,\ \ell(2)=1,\ \ell(3)=2,\ \ell(4)=1,\ \ell(5)=0,\ \ell(6)=2,\ 
\ell(7)=1,\ \ell(8)=0,\ \ell(9)=2,\ \ell(10)=1.
\]
Then the layers $V_{3,8}^{(c)}:=\{u\in V(\Gamma_{3,8}) : \ell(u)\equiv c \pmod 3\}$ are
\[
V_{3,8}^{(0)}=\{1,5,8\},\qquad
V_{3,8}^{(1)}=\{2,4,7,10\},\qquad
V_{3,8}^{(2)}=\{3,6,9\}.
\]

\paragraph{Permutation to a $3$-cyclic block form}
Let $S_{3,8}$ be the permutation matrix that reorders the basis according to
\[
\sigma_{3,8}:(1,2,3,4,5,6,7,8,9,10)\mapsto(1,5,8\,|\,2,4,7,10\,|\,3,6,9),
\]
that is, in the layer order $(V^{(0)}_{3,8}\,|\,V^{(1)}_{3,8}\,|\,V^{(2)}_{3,8})$. Put
$B_{3,8}:=S_{3,8}A_{3,8}S_{3,8}^{-1}$. Then $B_{3,8}$ has the $3$-cyclic block form
\[
B_{3,8}=
\begin{bmatrix}
0 & B^{(0)}_{3,8} & 0\\
0 & 0 & B^{(1)}_{3,8}\\
B^{(2)}_{3,8} & 0 & 0
\end{bmatrix},
\qquad (|V^{(0)}_{3,8}|,|V^{(1)}_{3,8}|,|V^{(2)}_{3,8}|)=(3,4,3),
\]
where $B^{(c)}_{3,8}$ records edges from $V^{(c)}_{3,8}$ to $V^{(c+1)}_{3,8}$ (indices modulo $3$). Explicitly,
\[
B^{(0)}_{3,8}=
\begin{bmatrix}
1&1&0&0\\
0&1&1&0\\
0&0&1&1
\end{bmatrix},\qquad
B^{(1)}_{3,8}=
\begin{bmatrix}
1&0&0\\
1&1&0\\
0&1&1\\
0&0&1
\end{bmatrix},\qquad
B^{(2)}_{3,8}=
\begin{bmatrix}
1&1&0\\
0&1&1\\
0&0&1
\end{bmatrix}.
\]

\paragraph{Cyclic core and the polygonal spectrum}
Define the core on the layer $V^{(0)}_{3,8}$ by
\[
K_{3,8}:=B^{(0)}_{3,8}B^{(1)}_{3,8}B^{(2)}_{3,8}
=
\begin{bmatrix}
2&3&1\\
1&3&3\\
0&1&3
\end{bmatrix}.
\]
Its characteristic polynomial is
\[
\det(xI-K_{3,8})=x^{3}-8x^{2}+15x-4,
\]
so $K_{3,8}$ has three real, positive, simple eigenvalues $0<\mu_1<\mu_2<\mu_3$.
Set
\[
\theta:=\frac13\arccos\!\left(\frac{26\sqrt{19}}{361}\right).
\]
Then (casus irreducibilis) one may write
\[
\mu_1=\frac{8}{3}+\frac{2\sqrt{19}}{3}\cos\!\left(\theta+\frac{2\pi}{3}\right),\qquad
\mu_2=\frac{8}{3}+\frac{2\sqrt{19}}{3}\cos\!\left(\theta-\frac{2\pi}{3}\right),\qquad
\mu_3=\frac{8}{3}+\frac{2\sqrt{19}}{3}\cos\theta.
\]
Numerically,
\[
\mu_1\approx 0.3186693564,\qquad
\mu_2\approx 2.3579263675,\qquad
\mu_3\approx 5.3234042761.
\]
Let $\omega_3:=\exp(2\pi i/3)$. By Proposition~\ref{prop:charpoly-reduction},
\[
\Spec(A_{3,8})
=\{0\}\ \cup\ \bigcup_{j=1}^3\{\mu_j^{1/3},\ \mu_j^{1/3}\omega_3,\ \mu_j^{1/3}\omega_3^2\}.
\]
Thus the nonzero spectrum of $A_{3,8}$ is the disjoint union of three regular triangles,
each with one vertex on the positive real axis; moreover $0$ has algebraic multiplicity $1$. See Figure~\ref{fig:spectrum_Gamma38} for the corresponding plot of these three triangle packets and the circle of radius $(27/4)^{1/3}$.

\end{document}